\documentclass[12pt]{amsart}
\usepackage{amssymb} 
\usepackage{amsthm}
\usepackage{amscd}
\usepackage{multicol}
\usepackage[dvips]{color}
\usepackage[all]{xy}
\usepackage[dvipdfmx]{hyperref} 
\hypersetup{
    colorlinks=true,   
    linkcolor=blue,    
    citecolor=blue,    
    pdfborder={0 0 0}, 
    bookmarksopen=true 
}

\newtheorem{thm}{Theorem}[section]

\newtheorem{lem-dfn}[thm]{Lemma-Definition}
\newtheorem{prop}[thm]{Proposition}
\newtheorem{cor}[thm]{Corollary}

\newtheorem{conj}[thm]{Conjecture}

\theoremstyle{definition}
\newtheorem{defn}[thm]{Definition}

\newtheorem{ex}[thm]{Example}

\newtheorem{quest}[thm]{Question}

\newtheorem{noname}[thm]{}
\theoremstyle{remark}
\newtheorem{rem}[thm]{Remark}

\numberwithin{equation}{section}

\DeclareMathOperator{\Spec}{Spec}

\DeclareMathOperator{\Proj}{Proj}

\DeclareMathOperator{\Hom}{Hom}

\DeclareMathOperator{\tr}{Tr}
\DeclareMathOperator{\Tr}{Tr}

\DeclareMathOperator{\di}{div}
\DeclareMathOperator{\Div}{Div}

\DeclareMathOperator{\chr}{char}

\DeclareMathOperator{\FRC}{frac}
\DeclareMathOperator{\Cl}{Cl}
\DeclareMathOperator{\cl}{cl}

\newcommand{\m}{\mathfrak m}

\newcommand{\KiR}{{K_R}^{-1}}

\newcommand{\PP}{\mathbb P}

\newcommand{\bbE}{\mathbb E}
\newcommand{\Z}{\mathbb Z}
\newcommand{\Q}{\mathbb Q}

\newcommand{\bbZ}{\ensuremath{\mathbb Z}}
\newcommand{\bbQ}{\ensuremath{\mathbb Q}}

\newcommand{\cA}{\mathcal A}

\newcommand{\cO}{\mathcal O}

\begin{document}
\title{Nearly Gorenstein normal graded rings}

\author{Tomohiro Okuma}
\address[Tomohiro Okuma]{Department of Mathematical Sciences, 
Yamagata University,  Yamagata, 990-8560, Japan.}
\email{okuma@sci.kj.yamagata-u.ac.jp}
\author{Kei-ichi Watanabe}
\address[Kei-ichi Watanabe]{Department of Mathematics, College of Humanities and Sciences, 
Nihon University, Setagaya-ku, Tokyo, 156-8550, Japan and 
Organization for the Strategic Coordination of Research and Intellectual Properties, Meiji University
}
\email{watnbkei@gmail.com}
\author{Ken-ichi Yoshida}
\address[Ken-ichi Yoshida]{Department of Mathematics, 
College of Humanities and Sciences, 
Nihon University, Setagaya-ku, Tokyo, 156-8550, Japan}
\email{yoshida.kennichi@nihon-u.ac.jp}
\dedicatory{Dedicated to the memory of J\"urgen Herzog}
\date{\today}
\thanks{TO was partially supported by JSPS Grant-in-Aid 
for Scientific Research (C) Grant Number 21K03215.
KW  was partially supported by JSPS Grant-in-Aid 
for Scientific Research (C) Grant Number 23K03040.
KY was partially supported by JSPS Grant-in-Aid 
for Scientific Research (C) Grant Number 24K06678.}
\keywords{Normal graded rings, nearly Gorenstein ring, rational singularity, 
trace ideal, elliptic singularity}
\subjclass[2020]{Primary: 13A02; Secondary: 14J17, 14B05, 13H10,13G05 13G05}

\begin{abstract} 
We investigate nearly Gorenstein property for a {\em normal} graded 
ring $R = \bigoplus_{n\ge 0}R_n$  finitely generated over a field. 
For that purpose, we investigate $\KiR$, the inverse of $K_R$ 
(the canonical module of $R$)  
 and introduce a new invariant $b(R)$ of $R$. 
We investigate nearly Gorenstein property of $R$ using $a(R)$ and 
$b(R)$ and $m(R)$, the initial degree of $R$. 
If $b(R)<0$, (and if $R$ is $\Q$-Gorenstein),  
then we believe that $R$ is log-terminal -- this is proved 
if $\dim R=2$ or $R$ is F-pure (or $F$-pure type).
\par
Then we determine the condition for a $2$-dimensional cone 
singularity over a smooth curve of genus $g\le 3$ to be nearly Gorenstein.   
We observe that \lq\lq almost Gorenstein" property and nearly 
Gorenstein property are drastically different for such rings.  
\end{abstract}

\maketitle

\section{Introduction}
\label{s:Intro}

For a Cohen-Macaulay local ring $(A,\m)$, Herzog-Hibi-Stamate 
defined \lq\lq {\em nearly Gorenstein} rings;   

\begin{defn}[{\cite[2.2]{HHS}}]
A Cohen-Macaulay local ring $(A,\m)$ with canonical module $K_A$ is 
said to be {\em nearly Gorenstein} 
if $\tr_A K_A \supset \m$. Here $\Tr_A K_A$ is the ideal 
 of $A$ generated by 
$\{ f(x)\;|\; f\in \Hom_A(K_A,A) \;{\rm and}\; x\in K_A\}$.
A Gorenstein ring $A$ is nearly Gorenstein by definition 
since $\Tr_A K_A =A$.  
\end{defn}

\par 
Note that if we consider $K_A$ as a fractional ideal in $Q(A)$, 
the total quotient ring of $A$, 
then we can identify $\Hom_A(K_A,A)$ with 
$K_A^{-1}:= \{ x\in Q(A)\;|\; xK_A\subset A\}$ 
and we can see $\Tr_A K_A = K_A\cdot K_A^{-1}$.   

\par 
In this paper, we investigate nearly Gorenstein property for 
{\em normal graded rings} 
finitely generated over a field $k=R_0$ using full use of gradings and also geometric properties  
of the normal projective variety $\Proj(R)$. 
\bigskip
\par
Let us explain the structure of this paper. 
Throughout this paper, let 
$R = \bigoplus_{n\ge  0} R_n$ be a 
{\em normal graded ring} finitely generated over $R_0=k$,  a field. 
In the definition of nearly Gorenstein rings, 
the properties    \lq\lq Cohen-Macaulay" and 
\lq\lq $\Tr_R(K_R)=\m_R$" are completely independent.  
So, in \S 2 and \S 3, we investigate the property $\Tr_R(K_R)=\m_R$.

\par 
In \S 2, we explain graded module structure of $K_R$, the canonical module of $R$ and 
$\KiR$, the inverse of  $K_R$ and introduce a new invariant 
\[
b(R).
\]

\par 
We explain necessary conditions for $R$ to be nearly Gorenstein using $a(R)$ (Goto-Watanabe's 
$a$-invariant), $b(R)$ and 
\[
m(R) = \min\{ n>0\;|\; R_n \ne 0\}.
\] 

\par 
Also, we discuss the property of $R$ if $b(R)<0$. This is analogous to the relation for $R$ 
to be rational singularity and $a(R)<0$. 

\par 
In \S 3, we discuss about Demazure's construction of normal graded rings 
which allows us more explicit discussion for normal graded rings. 

\par 
From \S 4, we discuss about the case $\dim R=2$, in which we have many nice results and 
bunch of examples. 
\par 
In \S 5, we investigate \lq\lq cone singularities", the case $D$ is an {\em integral} divisor
and determine all the nearly Gorenstein cone singularities $R(C,D)$ when the genus of $C$ is   
$2$ or $3$. 

\par
In \S 6, We recall \lq\lq almost Gorenstein" property of $R$ 
and 
we compare nearly Gorenstein property and almost Gorenstein property for rings 
treated in \S 5.  We see, surprisingly,  that two properties are 
{\em almost contradictory} for such rings.

\bigskip
\section{Normal Graded nearly Gorenstein Rings}
\label{s:Graded}

Throughout this paper, let $R = \bigoplus_{n\ge 0} R_n$ be a 
{\em normal  graded domain finitely generated} over $k= R_0$, a field. 
Then $K_R$ and $\KiR$ have natural structure of graded $R$-modules. 
It is well-known (cf. \cite{GW}) that $K_R$ is defined by 
\[
K_R= [H^d_{\m}(R)]^*, 
\]
where for a graded $R$-module $M =\bigoplus_{n\in \Z} M_n$, we define 
$M^*$ to be the $k$-dual of $M$ as graded $R$-modules. 
Namely, we define $M^{*}=\bigoplus_{n\in \Z} \Hom_k(M_n, k)$ 
 and the grading of $M^*$ is defined by $[M^*]_n = \Hom_k(M_{-n}, k)$.

\begin{defn}[\textrm{\cite{GW}}]
$a(R) = -  \min\{n\in \bbZ\;|\; [K_R]_n \ne 0\}$.  
\end{defn}
\par 
Now, let $Q(R) = S^{-1}R$, $S = \bigcup_{n\ge 0} R_n\setminus \{0\}$  
be the {\em homogeneous} total quotient ring of $R$. 
Since $K_R$ is a reflexive $R$-module of rank $1$, 
we can fix an isomorphism 
\[ 
K_R \otimes_R Q(R) \cong Q(R)
\] 
and by this identification, we put 
$\KiR = \bigoplus_{n\in \Z} [\KiR]_n$,  where 
\[
[\KiR]_n = \{ x\in Q(R)_n \;|\; x K_R \subset R\}.
\]  
Note that in the divisor class group $\Cl(R)$, $\cl(\KiR) = - \cl(K_R)$. 
\par 
We also define 
\[
b(R) =  \min\{n\in \bbZ\;|\; [\KiR]_n \ne 0\} 
\]
and also $m(R) = \min\{n>0\;|\; R_n\ne 0\}$. 
Then we have an easy necessary condition for $R$ to be nearly Gorenstein. 

\begin{prop}\label{a(R)b(R)} 
Let $R$ be a normal graded ring as above. Then$;$  
\begin{enumerate}
\item $-a(R)+b(R) \ge 0$ and \lq\lq $=$" holds if and only if 
$K_R$ is a {\em free} $R$-module. 
If $K_R$ is not a free $R$-module, then 
we have $-a(R) + b(R) \ge m(R)$. 
\item If $\Tr_R(K_R)=\m_R$ holds, then we have $-a(R) + b(R) = m(R)$ and 
also we have $[K_R]_{-a(R)}\cdot [\KiR]_{b(R)} =  R_{m(R)}$. 
\item Assume that $R$ is standard graded $(R = k[R_1])$. 
If $-a(R)+b(R)=1$ and 
$[K_R]_{-a(R)}\cdot [\KiR]_{b(R)} =  R_1$, then we have 
$\Tr_R(K_R)= \m_R$. 
\end{enumerate}
\end{prop}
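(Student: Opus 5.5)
The plan is to work entirely inside the graded quotient ring $Q(R)$. There $K_R$ and $\KiR$ are graded fractional ideals and $\Tr_R(K_R)=K_R\cdot\KiR$, which is a graded ideal of $R$. Choose nonzero homogeneous elements $\omega\in[K_R]_{-a(R)}$ and $\eta\in[\KiR]_{b(R)}$. Their product $\omega\eta$ is a nonzero element of $R$ (since $R$ is a domain) of degree $-a(R)+b(R)$. Because $R_n=0$ for $n<0$, this already gives $-a(R)+b(R)\ge 0$.

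For the equality case of (1), suppose $-a(R)+b(R)=0$. Then $\omega\eta\in R_0=k$ is a nonzero unit, so $\Tr_R(K_R)=R$. Let $u=\omega\eta$. For any $x\in K_R$ we have $x=u^{-1}\omega(\eta x)$ with $\eta x\in R$, so $K_R=R\omega$ is free. Conversely, if $K_R=R\omega'$ is free with $\omega'$ homogeneous, then $\KiR=R\omega'^{-1}$. The minimal degrees are then $\deg\omega'=-a(R)$ and $-\deg\omega'=b(R)$, so the sum is $0$. If $K_R$ is not free, the sum is positive, so $\omega\eta$ is a nonzero element of $R_n$ with $n>0$, and hence $n\ge m(R)$ by the definition of $m(R)$.

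For (2), assume $\Tr_R(K_R)=\m_R$. Then $K_R$ is not free, since otherwise the trace would be $R$. Because $R_{m(R)}\ne0$ lies in $\m_R=K_R\cdot\KiR$, the degree-$m(R)$ part of $K_R\cdot\KiR$ is nonzero. Every homogeneous product $x y$ with $x\in[K_R]_i$ and $y\in[\KiR]_j$ has degree $i+j\ge -a(R)+b(R)$. Hence $-a(R)+b(R)\le m(R)$, and combined with (1) this gives equality. The lowest-degree component of $K_R\cdot\KiR$ is then generated by products from the lowest degrees only, so $[K_R\KiR]_{m(R)}=[K_R]_{-a(R)}\cdot[\KiR]_{b(R)}$. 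Since this equals $[\m_R]_{m(R)}=R_{m(R)}$, the second claim of (2) follows.

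For (3), the trace $K_R\cdot\KiR$ contains $[K_R]_{-a(R)}[\KiR]_{b(R)}=R_1$, and $R_1$ generates $\m_R$ when $R=k[R_1]$, so $\m_R\subseteq \Tr_R(K_R)$. Also $-a(R)+b(R)=1>0$, so $K_R$ is not free by (1). Then the trace is a proper ideal: if $1\in K_R\KiR$, its degree-$0$ component would have to be nonzero, which is impossible because all products have degree $\ge1$. Therefore $\Tr_R(K_R)=\m_R$.

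The only delicate point is the identification $\Tr=K_R\KiR$ for graded fractional ideals, together with the fact that generators of a free graded rank-one module may be taken homogeneous; both are standard.
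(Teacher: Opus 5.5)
Your proof is correct and is essentially the paper's argument. The paper's one-line proof rests on multiplication by a nonzero homogeneous element of $K_R$ or $\KiR$ being injective, i.e.\ $\omega\eta\neq 0$ in the domain $R$. You carry out the resulting degree bookkeeping in full, including the unit argument giving $K_R=R\omega$ in the equality case and the observation that $[K_R\KiR]_{m(R)}=[K_R]_{-a(R)}[\KiR]_{b(R)}$ once $-a(R)+b(R)=m(R)$.
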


\begin{proof}  
The proof easily follows from the fact for any $x\ne 0, x \in [K_R]_s$ 
(resp. $y\in [\KiR]_t$), the multiplication $x : [\KiR]_n \to R_{n+s}$ 
(resp. $y : [K_R]_n \to R_{n+t}$) is injective.  
\end{proof}

We know that if $R$ is a rational singularity, then $a(R)< 0$ (\cite{Fl}, \cite{W1}). 
We have a similar but stronger conjecture for $b(R)$ as follows.

\begin{conj}\label{b(R)<0} If $\chr(k) = 0$,  
$b(R)< 0$ and if $R$ has an isolated singularity, 
then $R$ is of strongly F-regular type.  
If, moreover,  $R$ is $\Q$-Gorenstein (that is, $r\; cl(K_R) = 0$ in $\Cl(R)$ 
for some positive integer $r$), then $R$ is log-terminal.
\end{conj}

\par 
This Conjecture is true if $\dim R=2$ and we can prove that 
in the case $p>0$, 
if $R$ is F-pure and $b(R)<0$, then $R$ is F-regular. Note that if 
$b(R)<0$, then by Proposition \ref{a(R)b(R)}, $a(R)<0$ 
and hence $R$ is a rational singularity 
if $\chr(k)=0$ and $R$ has an isolated singularity (cf. \cite{Fl}, \cite{W1}).  

\par 
We will discuss about this  in the next section. 

\section{Demazure Construction of Normal Graded Rings}  
\label{s:Demazure}

\begin{noname}\label{3.1}
Let $R$ be a normal Cohen-Macaulay graded ring as above. Then $X=\Proj(R)$ is a normal projective 
variety over $k$ with $\dim X=d-1$ if $\dim R=d$. Then by Demazure \cite{Dem},  
there is an ample $\Q$-divisor $D\in \Div(X)\otimes_{\bbZ}\bbQ$, such that 
\[R = \bigoplus_{n\ge 0} H^0( X,\cO_X( nD))T^n,\] 
where 
\[
H^0( X, \cO_X(nD)) = \{0 \ne f\in k(X)\;|\; \di_X(f) + nD \ge 0\} 
\cup \{0\}. 
\]  
(In this case, $D\in \Div(X)$ if and only if $R^{(n)}=k[R_n]$ for every $n\gg 1$; \cite{GW}). 
Note that we have always $H^0( X, \cO_X(nD)) = H^0( X, \cO_X([nD]))$, where $[nD]$ is the 
unique maximal integral divisor $\le nD$. 
\par
We can write 
\[
D= B - \sum_{i=1}^r (p_i/q_i) V_i,
\] 
where $B\in \Div(X), V_1,\ldots, V_r$ are different reduced and 
irreducible subvarieties of 
codimension $1$,  $p_i, q_i$ are relatively prime integers with  
$0<p_i<q_i$. 
Then we put  
\[
\FRC(D) = \sum_{i=1}^r \dfrac{q_i-1}{q_i} V_i. 
\]
\par 
Since $[H^{d-1}(X, \cO_X([-nD]))]^* \cong H^0(X, \cO_X(K_X -[-nD]))$ by Serre duality, and   
$- [-nD] = [ nD + \FRC(D)]$,  we have;  
\[K_R = \bigoplus_{n\in \bbZ} H^0(X, \cO_X(K_X + \FRC(D)+ nD)) T^n;{\rm and}\] 
\[\KiR = \bigoplus_{n\in \bbZ} H^0(X, \cO_X(- K_X - \FRC(D)+ nD)) T^n. \] 
\par 
It is very convenient to consider $\deg(D)\in \Q$ for $D\in \Div(X)\otimes_{\Z}\Q$. 
\end{noname}

\begin{defn}\label{degD} 
We call an additive function $\deg : \Div(X) \to \Z$ a {\em degree} 
which induces $\deg :  \Div(X)\otimes_\Z \Q \to \Q$ which satisfies 
the conditions; 
\begin{enumerate}
\item[(1)] If $H^0(X,\cO_X(D))\ne 0$, then $\deg(D)\ge 0$, 
\item[(2)] If $\deg(D)=0$ and $H^0(X,\cO_X(D))\ne 0$, then  
$\cO_X(D)\cong \cO_X$.  
\end{enumerate}
\end{defn}
\par 
We can apply degree to nearly Gorenstein property. In the following Theorem \ref{degD-div}, 
\lq\lq deg" is any degree satisfying the conditions (1), (2) of Definition \ref{degD}. 

\begin{thm}\label{degD-div} Assume $R=R(X,D)$ satisfies the condition 
\lq\lq \; $\deg (K_X+\FRC(D)) =r \deg D$" for some positive 
integer $r$ and  $R_1\ne 0$. If  $K_R$ is {\bf not} a free $R$-module, 
then $R_1 \not\subset \Tr_R(K_R)$. 
Hence $\Tr_R (K_R)\subsetneq \m_R$ 
and $R$ is not nearly Gorenstein. 
\end{thm}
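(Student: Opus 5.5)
The idea is to look at the degree-one part of $\Tr_R(K_R)=K_R\cdot\KiR$ and show that it is zero. Since $R_1\ne 0$, this gives $R_1\not\subset\Tr_R(K_R)$. Because all modules are graded,
\[
[\Tr_R(K_R)]_1=\sum_{s+t=1}[K_R]_s\cdot[\KiR]_t ,
\]
so it suffices to show that $[K_R]_s\ne 0$ and $[\KiR]_t\ne 0$ with $s+t=1$ forces $K_R$ to be free. I will use the descriptions of $K_R$ and $\KiR$ from \ref{3.1}, together with conditions (1) and (2) of Definition \ref{degD}.

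\textbf{Preliminaries on degree.}
\begin{itemize}
\item $\deg D>0$. Since $R_1=H^0(X,\cO_X(D))\ne 0$, condition (1) gives $\deg D\ge 0$. If $\deg D=0$, then $D$ could not be ample with $\dim X>0$.
\item A nonzero effective integral divisor $V$ has $\deg V>0$. Condition (1) gives $\deg V\ge 0$. If $\deg V=0$, condition (2) gives $\cO_X(V)\cong\cO_X$, so $V=\di_X(f)$ with $f$ regular on the projective normal variety $X$. Then $f$ is constant and $V=0$.
\item For a $\Q$-divisor $E$, $H^0(X,\cO_X(E))=H^0(X,\cO_X([E]))$ and $\deg[E]\le\deg E$. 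Equality holds only if $E$ is integral, by the previous point applied to the prime components of $E-[E]$.
\end{itemize}

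\textbf{Bounding the possible degrees.} Put $E_s=K_X+\FRC(D)+sD$, so that $\deg E_s=(r+s)\deg D$ by hypothesis.
\begin{itemize}
\item If $[K_R]_s\ne 0$, then $0\le\deg[E_s]\le(r+s)\deg D$, hence $s\ge -r$.
\item Similarly, $[\KiR]_t\ne 0$ forces $t\ge r$.
\end{itemize}
With $s+t=1$, the only possibilities are $(s,t)=(-r,r+1)$ and $(s,t)=(-r+1,r)$.

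\textbf{Case $s=-r$.} We have $\deg E_{-r}=0$ and $H^0(X,\cO_X([E_{-r}]))\ne 0$. Then $0\le\deg[E_{-r}]\le 0$, so $E_{-r}$ is integral, and condition (2) gives $E_{-r}\sim 0$. Hence $K_X+\FRC(D)+nD\sim(n+r)D$ for all $n$, linearly equivalent via a single rational function. Therefore $K_R\cong R(r)$ as graded modules, so $K_R$ is free, contradicting the hypothesis.

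\textbf{Case $t=r$.} The same argument applied to $-K_X-\FRC(D)+rD$ gives $\KiR\cong R(-r)$. Then $K_R\cong(\KiR)^{-1}$ is free, again a contradiction.

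Hence $[\Tr_R(K_R)]_1=0$, so $R_1\not\subset\Tr_R(K_R)$. It follows that $\Tr_R(K_R)\subsetneq\m_R$, and $R$ is not nearly Gorenstein.

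The main obstacle is the rounding issue: passing from a nonzero section of a degree-zero $\Q$-divisor to its linear triviality. This is why I first show that a nonzero effective integral divisor has positive degree, which forces the fractional part to vanish before condition (2) is applied.
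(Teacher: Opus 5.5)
Your proof is correct and follows essentially the same route as the paper. The paper shows $a(R)<r$ and $b(R)>r$ when $K_R$ is not free, so $-a(R)+b(R)\ge 2>m(R)=1$ and the degree-one part of $K_R\cdot\KiR$ vanishes; this is exactly your bound $s\ge -r$, $t\ge r$ with equality forcing freeness. Your preliminary lemma, that a degree-zero $\Q$-divisor with a nonzero section must be integral and linearly trivial, justifies the rounding step the paper passes over when it asserts \lq\lq $rD$ and $K_X+\FRC(D)$ are not linearly equivalent, so $a(R)<r$''.
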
 

\begin{proof} We will show that 
$-a(R) + b(R)\ge 2 > m(R)=1$. 
\par
Now, since $H^0(X, \cO_X( K_X+ \FRC(D) - a(R)D))\ne 0$, and 
$rD$ and $K_X+ \FRC(D)$ are not linearly equivalent,
we must have  $a(R) < r$. 
 On the other hand, 
since $H^0(X, \cO_X( -K_X- \FRC(D) + b(R)D))\ne 0$
and we must have $b(R) > r$. Thus we have 
$-a(R) + b(R) \ge (-r +1)+ (r+1) \ge 2$.   
\end{proof} 

\par 
Now we will discuss about Conjecture \ref{b(R)<0}. 
First we treat the case $\dim R=2$. 

\begin{prop} Assume $R=R(X,D)$ is as in $\ref{3.1}$. 
If $\dim R=2$ and $b(R)<0$, 
then $R$ is F-regular $($resp. log-terminal$)$ 
if $\chr(k)=p > 5$ $($resp. $\chr(k)=0)$. 
\end{prop}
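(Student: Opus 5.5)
We work with the Demazure presentation of \ref{3.1}, where $X=C$ is a smooth projective curve of genus $g$ and $D$ is an ample $\Q$-divisor on $C$. We write $D=B-\sum_{i=1}^r (p_i/q_i)P_i$ and use the usual degree on $C$, which satisfies (1) and (2) of Definition \ref{degD}.

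The first step is to translate $b(R)<0$ into a numerical condition. By the description of $\KiR$ in \ref{3.1}, $b(R)<0$ means that for some $n<0$
\[
H^0\bigl(C,\cO_C(-K_C-\FRC(D)+nD)\bigr)\neq 0 .
\]
Taking degrees and using $\deg D>0$ gives
\[
\deg\bigl(-K_C-\FRC(D)\bigr)\ \ge\ -n\deg D\ >\ 0,
\]
that is,
\[
2g-2+\sum_{i=1}^r \frac{q_i-1}{q_i}<0 .
\]

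It is well known that this is exactly the numerical condition for log-terminality of the cone. By Watanabe's characterization, and Kawamata/Flenner--Zaidenberg in characteristic $0$, a two-dimensional normal graded ring $R(C,D)$ is log-terminal (equivalently, a quotient singularity) if and only if $\deg(K_C+\FRC(D))<0$. In characteristic $p$, Watanabe's result \emph{F-regular and F-pure normal graded rings} states that $R(C,D)$ is F-regular if and only if $\deg(K_C+\FRC(D))<0$, under a hypothesis on $p$ relative to the weights, for instance $p$ not dividing the $q_i$ or $p$ large.

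The second step is therefore a case analysis. The inequality forces $g=0$, so $C\cong\PP^1$, and $\sum(1-1/q_i)<2$. Hence $r\le 2$, or $r=3$ with $(q_1,q_2,q_3)$ one of the Platonic triples: $(2,2,n)$, $(2,3,3)$, $(2,3,4)$ or $(2,3,5)$.

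For $r\le 2$ the ring is toric, hence F-regular in every characteristic and log-terminal. For $r=3$ the ring is a quotient singularity in characteristic $0$, so it is log-terminal. In characteristic $p$ we need F-regularity in the triangle cases. These are of types $D_n$, $E_6$, $E_7$ and $E_8$ up to cyclic covers, and the relevant primes are $2,3,5$. So for $p>5$ the Fedder-type criterion, or Watanabe's graded criterion, applies and gives strong F-regularity. This is where the hypothesis $p>5$ is used.

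The main obstacle is this last step in positive characteristic. One has to justify either that Watanabe's F-regularity criterion $\deg(K_C+\FRC(D))<0$ holds once $p$ does not divide the $q_i$, or directly that the rational double point type rings obtained, together with their index-one covers, are F-regular for $p>5$. I would first try to cite the graded criterion. Failing that, I would reduce to the canonical covers, which are rational double points and hence F-regular for $p>5$, and then descend F-regularity via the splitting of the cyclic cover.
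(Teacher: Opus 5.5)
Your translation of $b(R)<0$ into $\deg(K_C+\FRC(D))<0$ and the reduction to $C\cong\PP^1$ with $r\le 2$ or a Platonic triple match the paper. The toric case and the characteristic-$0$ conclusion are fine.

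The gap is the one you flag yourself: for $r=3$ in characteristic $p>5$ nothing is actually proved.
\begin{itemize}
\item The graded criterion you quote carries an unspecified hypothesis on $p$. As you state it ($p\nmid q_i$), it misses type $(2,2,n)$ with $p\mid n$.
\item The cover argument needs the index-one cover to be \'etale in codimension one, i.e.\ $p$ must not divide the index of $K_R$ in $\Cl(R)$. You never check this, and in the generality you have reduced to it can fail.
\end{itemize}
Since you keep only the inequality $\deg(K_C+\FRC(D))<0$, you are in effect trying to show that every log-terminal $R(\PP^1,D)$ is F-regular for $p>5$, and there the index can be divisible by $p$. Take $D=2Q-\tfrac12P_1-\tfrac23P_2-\tfrac35P_3$. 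Then $\deg D=\tfrac7{30}$ and $\deg(K_C+\FRC(D))=-\tfrac1{30}$. A relation $m(K_C+\FRC(D))\sim sD$ forces $m=-7s$. Moreover $7(K_C+\FRC(D))+D=7K_C+2Q+3P_1+4P_2+5P_3$ is integral of degree $0$, so the index is exactly $7$. Your method therefore says nothing for $p=7$. This $R$ has $b(R)=2$, so it is not a counterexample to the statement, only to your method.

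The missing idea is to use $b(R)<0$ itself, not just its degree consequence.
\begin{itemize}
\item The paper first proves F-purity by following the Frobenius on the socle $H^1(\PP^1,\cO(K+\FRC(D)))$ of $H^2_\m(K_R)$ into $H^1(\PP^1,\cO([p(K+\FRC(D))]))$. This is where $p>5$ enters: for $(2,3,5)$ and $p=5$ the target has degree $-1$ and vanishes.
\item It then applies Theorem~\ref{F-pureB}. There, $-K_X-\FRC(D)\sim -b(R)D+B$ with $B\ge 0$ and $-b(R)D$ ample upgrades F-purity to strong F-regularity. No hypothesis on the $q_i$ or on the index is needed.
\end{itemize}
Alternatively, you can repair your own route. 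For $r=3$ one has $\deg D\in\frac1N\Z_{>0}$ with $\frac1N=\sum\frac1{q_i}-1=\deg(-K_C-\FRC(D))$, where $N=n,6,12,30$; for $(2,2,n)$ this uses $p_1=p_2=1$. Combined with $\deg(-K_C-\FRC(D))\ge -b(R)\deg D$, this forces $b(R)=-1$ and $\deg D=\frac1N$. A degree-$0$ $\Q$-divisor with a nonzero section is integral and principal, so $D\sim -K_C-\FRC(D)$. Hence $R$ is Gorenstein with $a(R)=-1$, i.e.\ the rational double point $D_{n+2}$, $E_6$, $E_7$ or $E_8$. It is its own index-one cover, and F-regularity for $p>5$ is the classical statement for these singularities (e.g.\ via Fedder's criterion).
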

\begin{proof} 
First note that if $b(R)<0$, then $R$ is a rational singularity 
and hence 
$\Cl(R)$ is a finite group (\cite{Li}). 
By definition of $b(R)$, we have $H^0(X, \cO_X(-K_X - \FRC(D) +b(R)D)) \ne 0$. 
This implies that $-K_X -\FRC(D) \ge - b(R)D$ 
and since $D$ is ample, 
 we must have $\deg( -K_X) > 0$ and $X = \PP^1$. 
 Also, if $\FRC(D) =\sum_{i=1}^r (q_i-1)/q_i$, 
then from $\deg(-K_X -\FRC(D))>0$, we get $r\le 3$ and if $r=3$, then $(q_1, q_2, q_3)$ 
with $(q_1\le q_2\le q_3)$ is one of $(2,2,n), (2,3,3), (2,3,4), (2,3,5)$. \par \vspace{1mm}
Now, the injective envelope of $E_R(R/\m)$ is written as  
\[
{\bf E}= E_R(R/\m) = H^2_{\m}(K_R) 
= \bigoplus_{n\in \Z}H^1(X, \cO_X(K_X + \FRC(D) + nD))T^n
\]
and the socle $0:_{\bf{E}} \m = H^1(X, \cO_X(K_X + \FRC(D)))$.
Now the Frobenius map $F$ on the socle is given by 
\[
F : H^1(X, \cO_X(K_X + \FRC(D))) \to H^1(X, \cO_X(p[K_X + \FRC(D)])).
\]
Then we can show that $R$ is $F$-regular if $p>2$ (resp. $p>3$; resp. $p>5$) 
when $(q_1,q_2,q_3)=(2,2,n)$ (resp. $(2,3,3), (2,3,4)$; resp. $(2,3,5)$).
For example, if  $(q_1,q_2,q_3)=(2,3,5)$ and $p=5$, then  
$\deg([5(K_X + \FRC(D))])=-1$ and $H^1(X, \cO_X(5[K_X + \FRC(D)])=0$.  
\par 
This shows that $R$ is F-pure if $p>5$. 
Thus the $F$-regularity of $R$ follows from Theorem \ref{F-pureB} below. 
\end{proof}

\begin{thm} \label{F-pureB}
Assume that $p>0, b(R)<0$ and $R$ is F-pure. Then $R$ is F-regular. 
\end{thm}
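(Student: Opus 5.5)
We argue with the graded Fedder/Glassbrenner-type criterion for strong F-regularity: a normal F-pure graded ring $R$ is strongly F-regular as soon as there is a homogeneous $c\in\m_R$ with $R_c$ strongly F-regular and, for some $e$, an $R$-linear map $\phi: F^e_*R\to R$ with $\phi(F^e_* c)=1$. Equivalently, via the canonical module, we need some power of Frobenius twisted by $c$ to act injectively on the socle of ${\bf E}=H^d_\m(K_R)$. Hence it suffices to find one homogeneous element $c\in\m_R$ along which $R$ becomes F-regular and for which the map $\phi$ exists. The hypothesis $b(R)<0$ will supply this $c$. We may assume $R$ is not Gorenstein, since if $K_R\cong R(a)$ then $b(R)=-a(R)$, and the argument below is the same with $\KiR=K_R^{-1}$ principal.

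First we translate F-purity into Frobenius splittings. Since $R$ is normal, $\Hom_R(F^e_*R,R)\cong F^e_*(K_R^{(1-p^e)})=F^e_*\bigl((\KiR)^{(p^e-1)}\bigr)$, with a grading shift, and F-purity gives a surjective (splitting) map $\phi_0$, homogeneous of degree $0$. Now take a nonzero $y\in[\KiR]_{b}$ with $b=b(R)<0$. Multiplying the section of $(\KiR)^{(p^e-1)}$ that corresponds to $\phi_0$ by $y^{p^e-1}$, or using the symbolic powers more carefully, produces maps $F^e_*R\to R$ of degree $(p^e-1)b/p^e<0$, i.e.\ maps that lower degree. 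The key point is that a negative-degree homogeneous map $\psi:F^e_*R\to R$ sends $F^e_*R_n$ to $R_{(n+\deg)/p^e}$. In particular it can send some element $F^e_*c$ of positive degree to a unit of $R_0=k$. We choose $e$ so that $p^e\cdot|b|$ exceeds the degree of a chosen generator $c$, and we arrange $\psi(F^e_*c)=1$ by combining $y^{p^e-1}$ with the splitting $\phi_0$.

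Second, we need a $c$ whose localization is F-regular. Since $R$ is F-pure and normal, its non-F-regular locus is closed and defined by the (graded) test ideal $\tau(R)$, which is nonzero and homogeneous, so $\tau(R)\ne 0$. We pick homogeneous $c\in\tau(R)\cap\m_R$, or any homogeneous $c$ with $R_c$ F-regular, using that $R$ is F-regular on the punctured spectrum away from $V(\tau)$. Combining with the first step then gives strong F-regularity. An alternative formulation is through $\tau(R)$ directly: $\tau(R)$ is stable under every $\phi\in\Hom(F^e_*R,R)$, and F-purity makes $\tau(R)$ radical and compatible. The degree-lowering maps $\psi$ from the first step move $\tau(R)$ into lower degrees, and since $\tau(R)$ is homogeneous with $\psi(F^e_*\tau)\subseteq\tau$, we obtain an element of degree $0$ in $\tau(R)$, i.e.\ $1\in\tau(R)$. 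So $R$ is F-regular.

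The main obstacle is the first step. We must make sure that the map built from $y^{p^e-1}$ and the splitting is really a well-defined element of $\Hom_R(F^e_*R,R)$ with the right degree shift (symbolic versus ordinary powers of $\KiR$), and that its image of the relevant graded piece actually hits $R_0$ rather than being zero. We would first try to control this through the Demazure description: write $\Hom(F^e_*R,R)$ as sections of $\cO_X((1-p^e)(K_X+\FRC(D))+nD)$, with the fractional part handled on each $V_i$, and check nonvanishing of the composite on $X$ using that $y$ gives an effective divisor $-K_X-\FRC(D)+bD\ge0$.
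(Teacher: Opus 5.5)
\textbf{There is a genuine gap, and it sits exactly at the step that carries the whole theorem.}

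The surrounding setup is fine. The Hochster--Huneke criterion, the choice of a homogeneous $c\in\tau(R)\cap\m_R$ with $R_c$ strongly F-regular, and the existence of negative-degree maps all hold. For the last point, take $q=p^e$ and $\phi_y=T\circ F^e_*(y^{q-1}\,\cdot\,)$, where $T\colon F^e_*K_R^{(1-q)}\to R$ is the trace. This map sends $F^e_*R_n$ into $R_{(n-(q-1)|b|)/q}$.

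The gap is your sentence ``we arrange $\psi(F^e_*c)=1$ by combining $y^{p^e-1}$ with the splitting $\phi_0$'', which nothing supports.
\begin{itemize}
\item Degree bookkeeping only tells you which graded piece $\psi(F^e_*c)$ lies in. For $e\gg0$, $\phi_y(F^e_*c)$ is simply $0$ for degree reasons.
\item What you actually need is a homogeneous $r$ of degree $(q-1)|b|-\deg c$ with $T(F^e_*(y^{q-1}rc))\neq 0$. Equivalently, $\m_R$ must fail to be compatible with the negative-degree part of $\Hom_R(F^e_*R,R)$.
\item ``Combining'' does not produce this. The product of $y^{q-1}$ with the section $s_0$ defining $\phi_0$ lies in $K_R^{(2-2q)}$, not in $K_R^{(1-q)}$, so it is not a map $F^e_*R\to R$.
\item Your $\tau$-variant fails for the same reason. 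The inclusion $\psi(F^e_*\tau)\subseteq\tau$, together with $\psi$ lowering degrees, is perfectly consistent with $\psi(F^e_*\tau)\subseteq\tau\cap\m_R$. A degree-lowering map can just vanish on the relevant pieces.
\end{itemize}

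\textbf{This step cannot be supplied from F-purity and $b(R)<0$ alone, because the statement as written is false.}

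Let $A=k[x,y,z]/(f)$ be the cone over an ordinary elliptic curve, and let $R=A[w]$ with $\deg w=1$.
\begin{itemize}
\item $R$ is Gorenstein and F-pure, by Fedder's criterion: the coefficient of $(xyz)^{p-1}$ in $f^{p-1}$ is the nonzero Hasse invariant.
\item $a(R)=-1$, so $b(R)=a(R)=-1<0$. Note that in the Gorenstein case $b(R)=a(R)$, not $-a(R)$ as you wrote; compare Proposition 2.3(1) and the paper's remark $b(R[t])=b(R)-\deg t$.
\item $R$ is not F-regular. Its direct summand $A$ has $a(A)=0$, so $A$ is not even F-rational.
\end{itemize}
In this example $\tau(R)=\m_AR$. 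The maps $\phi_y$ exist and send $F^e_*w^{q-1}$ to a unit. Yet every $\phi\in\Hom_R(F^e_*R,R)$ sends $F^e_*x$ into $\m_AR$.

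So an extra hypothesis is needed, for instance the isolated-singularity assumption of Conjecture~\ref{b(R)<0}. The paper itself notes that this assumption is necessary, in the remark after Question~\ref{b(R)<0; ques}. The missing nonvanishing argument would have to use it.

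The paper's own proof has the same hole. It reduces to showing $c^{1/q}(\xi\otimes 1)\neq 0$ in ${\bf E}\otimes_R R^{1/q}$, which is Matlis dual to your condition $\phi(F^e_*c)=1$. It then asserts this is ``certainly satisfied'' because $-K_X-\FRC(D)=-b(R)D+B$. Its reformulation ``$c\cdot\bigoplus_n H^0(\cdots)T^n\neq 0$'' is automatic for $c\neq 0$ and is not equivalent to the pairing condition. The example above shows that the assertion can fail.
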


The proof proceeds similarly as in \cite{Wat91}.
 
\begin{proof}[Proof of Theorem $\ref{F-pureB}$]
Since $H^0(X, \cO_X(-K_X -\FRC(D) + b(R)D ))\ne 0$, we have 
\[ -K_X - \FRC(D) = - b(R) D + B, \] 
where $B\ge 0$ and $- b(R) D$ is an ample $\Q$ divisor since $b(R)<0$. 
\par
Now, assume that $\chr(k)=p>0$. $R$ is strongly F-regular if for any $c\in R, c\ne 0$ (we may assume that $c$ is homogeneous), there exists $q= p^e$ such that there exists an $R$-homomorphism 
$\phi : R^{1/q} \to R$ with $\phi(c^{1/q})=1$. \par
Now, let ${\bf E} = E_R(R/\m)$ be the injective envelope of $R/\m$ and $\xi$ be a generator 
of the socle $[0:_{\bf E} \m]$ of $\bf E$. 
Then we see that the existence of $\phi$ such that 
$\phi(c^{1/q})=1$ is equivalent to say that $c^{1/q}(\xi\otimes 1) \ne 0$ in 
${\bf E}\otimes_R R^{1/q}$. Since $R$ is F-pure,   $(\xi\otimes 1) \ne 0$ in 
${\bf E}\otimes_R R^{1/q}$.
\par  
Now, since the injective envelope 
\[
{\bf E} \cong H^d_{\m}(K_R) =\bigoplus_{n\in \Z} H^{d-1}(X, \cO_X( K_X+\FRC(D)+nD)) T^n,
\]
where $\xi$ is given by a nonzero element of 
\[
H^{d-1}(X, \cO_X( K_X+\FRC(D)))= 
H^{d-1}(X, \cO_X( K_X))\cong k.
\]  
Then we have 
\[{\bf E}\otimes_R R^{1/q} \cong \left[
\bigoplus_{n\in \Z} H^{d-1}(X, \cO_X( q(K_X+\FRC(D))+nD)) T^n\right]^{1/q}\]
and taking dual, the right hand side is dual to  
\[\left[ \bigoplus_{n\in \Z} H^0(X, \cO_X((1-q)(K_X+\FRC(D))+nD)) T^n\right]^{1/q}\]
by Serre duality. Then the condition $c^{1/q}(\xi\otimes 1) \ne 0$ is equivalent to say 
\[
c\cdot \bigoplus_{n\in \Z} H^0(X, \cO_X((1-q)(K_X+\FRC(D))+nD)) T^n 
\ne 0.
\] 
Since we have seen that $-K_X - \FRC(D) = - b(R) D + B$ above with $- b(R) D$ ample and 
$B\ge 0$, this condition is certainly satisfied and we conclude that $R$ is F-regular.  
\end{proof}

\begin{quest}\label{b(R)<0; ques} Let $R=R(X,D)$ as in \ref{3.1} and assume that $\chr(k)=0$. 
For a prime number $p>0$, let $R_p$ be a mod $p$ reduction of $R$. If $b(R)<0$ 
and $R$ has an isolated singularity,  then is $R_p$ 
F-pure (and F-regular) for sufficiently large $p$ ?
\end{quest}


\begin{rem}\label{R[t]&Segre} If $t$ is a variable over $R$, then $K_{R[t]}\cong t K_R[t]$,  
$K_{R[t]}^{-1}\cong t^{-1}\KiR[t]$ and $\Tr_{R[t]}(K_{R[t]})= \Tr_R(K_R)[t]$.  
Hence we have $b(R[t]) = b(R)- \deg(t)$   
and that if $R[t]$ is nearly Gorenstein, then $R$ is Gorenstein.\par
Thus in Conjecture \ref{b(R)<0} and Question \ref{b(R)<0; ques} the condition \lq\lq $R$ has an 
isolated singularity" is necessary.
\par  
By the same reason, for normal graded rings $R$ and $S$ with $R_0=S_0=k$, 
$K_{R\otimes_k S}\cong K_R\otimes_k K_S$ and if $R\otimes_k S$ is nearly Gorenstein, 
then $R$ and $S$ are Gorenstein (cf. \cite{HHS}).
\end{rem}

\par
Next, we study nearly Gorenstein 2-dimensional normal graded rings.

\section{The case $\dim R =2$}
\label{dim2}

In this section, let $R=R(C,D)$ be as in \ref{3.1} and from now on, we assume 
that $k$ is {\em algebraically closed} and $\dim R=2$. \par
Then $C$ is a curve and each $V_i$ is a point, so that we can write 
\begin{equation} 
D = B - \sum_{i=1}^r (p_i/q_i) P_i,
\end{equation}
where $B$ is an integral divisor on $C$, $P_1, \ldots P_r$ are distinct points on $C$.

\par 
We define $\deg (D)$ so that $\deg(P)=1$ for every point $P\in C$, 
 since we have assumed that $k$ is algebraically closed.

\begin{noname}\label{star-resol} 
Let $\pi: X\to \Spec(R)$ be the  minimal good  resolution of $\Spec(R)$. 
Then the exceptional set $\bbE = \pi^{-1}(\m)$ is \lq\lq 
star-shaped" in the sense that 
\[
\bbE = C \cup \bigcup_{i=1}^r E_{i,1}\cup E_{i,2}\cup \cdots  \cup  E_{i,s_i},
\]
with $C^2= - \deg B$, $E_{i,j}^2 = - b_{i,j}$, where each  
$E_{i,1}\cup E_{i,2}\cup \cdots  \cup E_{i,s_i}$ is a chain of $\PP^1$ 
and $b_{i,j}$ is given by the continued fraction
\[
q_i/p_i = [[b_{i,1}, b_{i,2}, \ldots , b_{i, s_i}]]. 
\] 
\end{noname}

\begin{noname}\label{pa(Z)} 
Let  $\pi: X\to \Spec(R)$ and $\bbE$ be the  minimal good resolution and exceptional set as above. 
We call  a $\Z$-linear combination of $C$ and 
$E_{ij}$, a {\em cycle}.  
A cycle $Z$ is called {\em anti-nef} if $Z C, Z E_{i,j} \le 0$. 
Since the intersection
 matrix of $\bbE$ is negative definite, there exists unique 
 minimal anti-nef cycle 
 $\Z_f  >0$. Then we can show that $A$ is rational 
 (resp. {\em elliptic}) if  $p_a(\Z_f) =0$ (resp. $p_a(\Z_f)=1$), 
where $p_a(Z)$ is defined by $p_a(Z)=(Z^2+K_XZ)/2 +1$.    
\end{noname}

\par 
First we see that nearly Gorenstein condition  for $R=R(C,D)$ is  very strong even if 
$R$ is a rational singularity or an elliptic singularity.
Note that if $R$ is a rational or an elliptic singularity, 
then $R$ is almost Gorenstein (cf. \cite{GTT}, \cite{OWYaG}; See also \S 6).
\par
We recall the characterization of nearly Gorenstein rational singularity of dimension $2$, 
using minimal resolution of $\Spec(R)$. 
This result is the main theorem of 
\cite{MOWY}.

\begin{thm}[\textrm{\cite[Theorem 4.2]{MOWY}}] \label{Zf-antinef} 
Let $(A,\m)$ be an excellent normal local ring of dimension $2$ 
and $\pi: X\to \Spec(A)$ be the minimal resolution of $\Spec(A)$. If $A$ is a 
rational singularity, then $A$ is nearly Gorenstein if and only if 
$K_X + Z_f$  is anti-nef.
\end{thm}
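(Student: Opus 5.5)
The plan is to compute the trace ideal on the minimal resolution $\pi: X\to \Spec(A)$ and compare it with $\m$. Since $A$ is rational, we use Lipman's theory of integrally closed ideals. For every anti-nef cycle $Z>0$ the sheaf $\cO_X(-Z)$ is generated by global sections, and $I_Z := H^0(X,\cO_X(-Z))$ is integrally closed with $I_Z\cO_X=\cO_X(-Z)$. We also have $\m = I_{Z_f}$, and $\m\cO_X=\cO_X(-Z_f)$. Since $\Cl(A)$ is finite, the modules $K_A$ and $K_A^{-1}$ are reflexive of rank one. We would realise them as $K_A\cong H^0(X,\cO_X(K_X))$ and $K_A^{-1}\cong H^0(X,\cO_X(-K_X))$, reading both inside $H^0(X\setminus \bbE,\cdot)$. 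Here $K_X$ is the canonical divisor, chosen so that it is supported away from $\bbE$ except for its $\bbE$-part; equivalently we use $\Q$-cycles via the numerical pullback. The trace $\Tr_A(K_A)=K_A\cdot K_A^{-1}$ is then the image of $H^0(\cO_X(L))\otimes H^0(\cO_X(-L))\to A$, where $L$ runs through representatives. We aim to show that the integral closure of $\Tr_A(K_A)$ is represented by a specific cycle.

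The first key step is a Lipman-type generation statement for reflexive modules over a rational singularity. For each divisorial sheaf $\cO_X(L)$ one considers $\cO_X(\tilde L)$, where $\tilde L$ is the \lq\lq nef-ification'' of $L$: the largest divisor $\le L$ differing from $L$ by an exceptional cycle that makes it anti-nef on $\bbE$. By rationality and the vanishing $H^1(X,\cO_X(M))=0$ for such nef-type $M$, the sheaf $\cO_X(\tilde L)$ is generated by global sections, and $H^0(\cO_X(L))=H^0(\cO_X(\tilde L))$. Applying this to $L=K_X$ and to $L=-K_X$, with generated representatives $\tilde K$ and $\widetilde{-K}$, we get $\Tr_A(K_A)\cO_X=\cO_X(\tilde K+\widetilde{-K})$. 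Here $Z_{\rm tr}:= -(\tilde K+\widetilde{-K})$ is an effective anti-nef exceptional cycle, and it vanishes if and only if $A$ is Gorenstein. So $A$ is nearly Gorenstein if and only if $\overline{\Tr_A K_A}\supset \m$, that is $Z_{\rm tr}\le Z_f$. One must still check that $\Tr\supset\m$ follows from this condition on integral closures. For that we would use that on rational singularities products of integrally closed ideals are integrally closed, so $\Tr_A(K_A)=I_{-\tilde K}\,I_{\widetilde{-K}}$ (in the twisted sense) is itself integrally closed.

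The second step is to compute $\tilde K$ and $\widetilde{-K}$ explicitly and translate $Z_{\rm tr}\le Z_f$ into the anti-nefness of $K_X+Z_f$. On the minimal resolution $K_X$ is nef on $\bbE$ (no $(-1)$-curves), so $\cO_X(-K_X)$ already has the anti-nef shape. The minimal anti-nef exceptional adjustment of $K_X$ is obtained by a Laufer-type sequence starting from $K_X$. If $Z_{\rm tr}\le Z_f$, then $\tilde K$ is $\ge K_X - Z_f$ in the relevant sense. Intersecting with each $E$ and using that the nonreduced parts are controlled by $Z_f$, we get $(K_X+Z_f)E\le 0$. Conversely, if $K_X+Z_f$ is anti-nef, then $K_X+Z_f$ is itself a generated representative, so the section $K_A\cdot K_A^{-1}$ contains $I_{Z_f}=\m$.

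The main obstacle is the first step: proving that products of the generated reflexive rank-one sheaves behave like Lipman's integrally closed ideals, so that the trace equals the global sections of the tensor product rather than something strictly smaller. I would first try the surjectivity argument from $H^1$-vanishing (the Castelnuovo--Mumford style \lq\lq$H^0\otimes H^0\to H^0$ onto'' lemma for generated sheaves on rational singularities). Then I would handle the twist by $K_X$ through passing to the canonical cover if necessary.
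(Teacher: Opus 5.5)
The paper does not prove this statement; it quotes it from \cite{MOWY}. I am therefore comparing your proposal with the standard argument.

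Your first step is right in outline. You take $K_A=H^0(\cO_X(K_X))$ and $K_A^{-1}=H^0(\cO_X(-K_X))$, replace each divisor by the largest nef divisor below it (Laufer sequence), and use Lipman to get $\Tr_A(K_A)\cO_X=\cO_X(-Z_{\rm tr})$. The second step, which carries the content of the theorem, has a genuine gap: the roles of $K_X$ and $-K_X$ are reversed, and the inequality $(K_X+Z_f)E\le 0$ is never actually derived.

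On the minimal resolution $K_XE=-2-E^2\ge 0$ for every exceptional curve. So $\cO_X(K_X)$ is nef, hence generated by Lipman's theorem, and your own recipe gives $\tilde K=K_X$ with no adjustment. It is $\cO_X(-K_X)$ that is not generated: its degree on $E$ is $E^2+2<0$ whenever $E^2\le -3$. The Laufer sequence must therefore start from $-K_X$, giving $\widetilde{-K}=-K_X-Z_{\rm tr}$, where $Z_{\rm tr}$ is the smallest effective cycle with $K_X+Z_{\rm tr}$ anti-nef.

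With your assignment ($\widetilde{-K}=-K_X$, $\tilde K=K_X-Z_{\rm tr}$), the equality $Z_{\rm tr}=Z_f$ would only say that $K_X-Z_f$ is nef. That always holds, so $(K_X+Z_f)E\le 0$ cannot come out of it. Your sentence about nonreduced parts being controlled by $Z_f$ is covering exactly this point. The same sign error appears in your converse: $\cO_X(K_X+Z_f)$ is not generated unless $K_X+Z_f\equiv 0$; the generated sheaf is $\cO_X(-K_X-Z_f)$.

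Once the roles are corrected, the missing step is short, and it uses a fact you stated but did not exploit. $Z_{\rm tr}$ is anti-nef, and it is nonzero when $A$ is not Gorenstein (the Gorenstein case is trivial). By minimality of the fundamental cycle, $Z_{\rm tr}\ge Z_f$.

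If $\m\subset\Tr_A(K_A)$, then $\cO_X(-Z_f)=\m\cO_X\subset\Tr_A(K_A)\cO_X=\cO_X(-Z_{\rm tr})$, so $Z_{\rm tr}\le Z_f$. Hence $Z_{\rm tr}=Z_f$, and $K_X+Z_f$ is anti-nef.

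Conversely, if $K_X+Z_f$ is anti-nef, then $\cO_X(-K_X-Z_f)$ is nef, and
$\m=H^0(\cO_X(-Z_f))=H^0(\cO_X(K_X))\cdot H^0(\cO_X(-K_X-Z_f))\subset K_A\cdot K_A^{-1}$.
Integral closedness of the trace is not even needed.

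You also need the product formula $H^0(\mathcal L)\,H^0(\mathcal M)=H^0(\mathcal L\otimes\mathcal M)$ for nef $\mathcal L,\mathcal M$. The Koszul/Castelnuovo--Mumford version you propose would require $H^1(\mathcal M\otimes\mathcal L^{-1})=0$. Here neither $-2K_X-Z_f$ nor $2K_X+Z_f$ is nef in general, so that vanishing is not available.

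Use Lipman's argument instead. Take a general section of $\cO_X(K_X)$ and restrict to its zero curve $H$. This curve is finite over $\Spec A$, hence affine, and the vanishings $H^1(\cO_X)=H^1(\mathcal M)=0$ then give the surjectivity. No passage to the canonical cover is needed.
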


\begin{prop}[\textrm{cf. \cite[Theorem 4.2]{MOWY}}]
\label{R(C,D)rational}  
If $R(C,D)$ is a rational singularity, then 
$C= \PP^1$ and if $R$ is nearly Gorenstein, then 
either $r\le 2$ $($$R$ is a \lq\lq cyclic quotient singularity"$)$ 
or $\deg B = r-1$. 
\end{prop}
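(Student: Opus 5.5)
The plan is to obtain $C=\PP^1$ from rationality, and then to derive the constraint on $\deg B$ from two inequalities: a lower bound coming from rationality and an upper bound coming from Theorem \ref{Zf-antinef} tested on the central curve $C$.

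\emph{Step 1: $C=\PP^1$.} Let $\pi:X\to\Spec(R)$ be as in \ref{star-resol}. For a rational singularity every component of the exceptional set is a smooth rational curve; equivalently, $p_a(Z_f)=0$ forces $p_a$ of every irreducible component to vanish. The central curve of $\bbE$ is isomorphic to $C$, so $C\cong\PP^1$.

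\emph{Step 2: lower bound $\deg B\ge r-1$.} Assume $r\ge 3$; otherwise the graph is a chain and we are in the cyclic quotient case. By Pinkham's criterion, $R(\PP^1,D)$ is rational if and only if $\deg[nD]\ge -1$ for all $n\ge 0$; equivalently $H^1(\PP^1,\cO([nD]))=0$, which is the same as the vanishing of $a$-invariant type terms. Taking $n=1$ gives $[D]=B-\sum P_i$. Hence
\[
\deg B - r\ge -1, \qquad\text{that is,}\qquad \deg B\ge r-1 .
\]

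\emph{Step 3: upper bound from Theorem \ref{Zf-antinef}.} Suppose $R$ is nearly Gorenstein. Since $r\ge 3$, $C$ meets at least three chains. Contracting a $(-1)$-curve would only occur at the end of a chain, which does not happen for the minimal good resolution, so the minimal good resolution here is also minimal, and Theorem \ref{Zf-antinef} applies: $K_X+Z_f$ is anti-nef. I test this on $C$.

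Assume for contradiction that $\deg B\ge r$. Then the reduced cycle $\bbE$ is anti-nef:
\begin{itemize}
\item on $C$ we have $\bbE\cdot C=-\deg B+r\le 0$;
\item on each $E_{i,j}$ we have $\bbE\cdot E_{i,j}\le -b_{i,j}+2\le 0$, since $b_{i,j}\ge 2$ for the chains of a continued fraction.
\end{itemize}
So $Z_f=\bbE$, with $C$ appearing with coefficient $1$. Adjunction on $C\cong\PP^1$ gives $K_X\cdot C=-2-C^2=-2+\deg B$. Therefore
\[
(K_X+Z_f)\cdot C=(-2+\deg B)+(r-\deg B)=r-2>0 ,
\]
contradicting anti-nefness. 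Hence $\deg B\le r-1$, and combined with Step 2 this gives $\deg B=r-1$.

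The main obstacle is the justification in Steps 2–3: pinning down Pinkham's rationality criterion in the form $\deg[nD]\ge -1$, and checking that Theorem \ref{Zf-antinef} may be applied with the star-shaped resolution of \ref{star-resol}, i.e.\ that this resolution is minimal when $r\ge 3$. If that identification fails, I would instead compute $Z_f$ directly by Laufer's algorithm on the star-shaped graph, starting from the reduced cycle, to confirm $Z_f=\bbE$ when $\deg B\ge r$.
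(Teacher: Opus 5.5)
Your proof is correct and is essentially the paper's argument: the case $\deg B\le r-2$ is excluded because $\deg[D]=\deg B-r\le -2$ gives $[H^2_{\m}(R)]_1=H^1(C,\cO_C([D]))\neq 0$, which is your Pinkham criterion at $n=1$, and the case $\deg B\ge r$ is excluded because $Z_f$ is reduced and Theorem \ref{Zf-antinef} then fails, which you make explicit with $(K_X+Z_f)\cdot C=r-2>0$. The one thing to fix is your reason for minimality of the star-shaped resolution. A priori only $C$ could be a $(-1)$-curve, not the end of a chain. The correct justification is that all $b_{i,j}\ge 2$ and, in Step 3, $C^2=-\deg B\le -r\le -3$, so there is no $(-1)$-curve at all.
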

\begin{proof} If $\deg B \ge r$, then the fundamental cycle $\Z_f$ of the minimal resolution 
of $\Spec(R)$ is reduced and by Theorem 4.2 of \cite{MOWY}, if $R$ is nearly Gorenstein, then 
$R$ is a \lq\lq cyclic quotient singularity". \par 

On the other hand, if $r\ge \deg B+2$, then $\deg ([D]) = \deg B - r\le -2$. 
Then we have $[H^1_{\m}(R)]_{1}\ne 0$. Hence we have $a(R) > 0$ and $R$ is not a 
rational singularity. 
\end{proof}

\par 
The condition $\deg B = r-1$ is very far from $R$ to be nearly Gorenstein. 
We will compute neary Gorenstein property of $R(C,D)$ of a special type.

\begin{ex} For an integer $r\ge 3$, let $R=R(C,D)$ with 
\[D = (r-1)Q - \dfrac{p}{q} (P_1+P_2+\ldots + P_r),\]
where $C=\PP^1, Q, P_1, P_2, \ldots,  P_r$ are points on $\PP^1$ and we assume 
$P_1, P_2, \ldots,   P_r$ are different points and 
$\deg D = (r-1)- r p/q >0$. Then we have the following results;
\begin{enumerate}
\item $\dim_k [K_R]_1 = r-2 >0$.
\item $R$ is a rational singularity if and only if $p/q \le 1/2$. 
\item If $R$ is a rational singularity then $R$ is nearly Gorenstein
 if and only if $p=1$ and $R$ is Gorenstein 
that is, $p=1, q=2$ and $r=3$. 
\item  $R$ is an elliptic singularity if and only if 
$1/2 < p/q \le 2/3$. In this case, 
we have $m(R)=3$ and $\dim_k R_3 = r-2$.  
\item If $R$ is an elliptic singularity and if $r\ge 4$, 
then we have $a(R)=2$ and $p_g(R)=1$. If $R$ is an elliptic singularity, 
$r=3$ and $(1+2s)/(2+3s) < p/q \le (3+2s)/(5+3s)$ for 
$s\in \Z_{\ge 0}$, then $\deg[nD]=-2$ for 
$n= 2, 5, \cdots , 2+3s$ and $\deg[nD]\ge -1$ for other $n\ge 0$. 
Then we have $a(R)= 2+3s$ and $p_g(R)= s+1$.  
\item  If $q$ is odd and $p=(q+1)/2$ then $R$ is Gorenstein with $a(R)=2$.  
Conversely, if $R$ is Gorenstein with $a(R)=2$, then we have $2p=q+1$. 
\item If $r=3$, then for $s=0,1,2,\ldots$,  $R$ is Gorenstein with 
       $a(R)= 2+3s$, if and only if 
       $(2+3s)p=(1+2s)q+1$ holds. 
\item  If $p/q = 5/8$ and $r\ge 4$, then $R$ is elliptic, nearly Gorenstein  
       and not Gorenstein.  
\end{enumerate} 
\end{ex} 

\begin{proof} (1) It is easy to see that $[K_C + \FRC(D) + D] \sim (r-3)Q$. Since 
$r-3\ge 0$, we deduce that $[K_R]_1\ne 0$. 
\par \vspace{1mm}
(2) We have always $\deg [D] = -1$.  We have $\deg [2D] = r-2>0$ if $p/q \le 1/2$ 
and $-2$ if $p/q > 1/2$. Hence $[H^2_{\m}(R)]_2 \ne 0$ and $a(R)>0$ if $p/q > 1/2$, which 
implies that $R$ is {\em not} a rational singularity. Conversely, if $p/q \le 1/2$, then it is 
easy to see that $\deg [nD] \ge 0$ for every $n\ge 2$ and then we have $a(R)<0$ and $R$ is   
a rational singularity. 
\par \vspace{1mm}
(3) We assume that $p/q \le 1/2$. Then $a(R)=-1$ and $m(R)=2$ by (1), (2). 
Hence if $R$ is nearly Gorenstein, then we must have $b(R)=1$ by Proposition \ref{a(R)b(R)}.
We compute  $\deg ([-K_C -\FRC(D) + D ]) = 1$ if $p=1$ and $1 - r <0$ if $p > 1$ since 
$-(q-1)/q - p/q] = -2$ if $p>1$. Namely, $[\KiR]_1\ne 0$ if and only if $p=1$. 
Thus $R$ is not nearly Gorenstein if $p>1$. 
\par 
If $p=1$, then the resolution of $\Spec(R)$ as in 
\ref{star-resol} is $\bbE = C \cup \bigcup_{i=1}^r E_i$ with $C^2 = -r+1, E_i^2=-q$ 
for every $i$. Then it is easy to see that $Z_f = 2C + \sum_{i=1}^r E_i$ and 
$Z_f + K_X$ is anti-nef. Hence by Theorem \ref{Zf-antinef}, 
$R$ is nearly Gorenstein if $p=1$. 

\par \vspace{1mm}
(4) If $p/q > 1/2$, we have seen that $\deg ([2D]) =-2$ and similarly, if $p/q > 2/3$, 
then $\deg([3D]) = -3$ and $R$ is not an elliptic singularity by Proposition \ref{Tomari}. 
On the other hand, if $p/q \le 2/3$, then we have $\deg([3D]) = r-3\ge 0$ and we have 
$\dim R_3 = r-2 > 0$ and  $m(R) = 3$. Then $R$ is an elliptic singularity by Proposition 
\ref{Tomari}. 
\par \vspace{1mm}
(5) If  $1/2< p/q \le 2/3$, then we have  $\deg([4D]) = 4(r-1) - 3r = r-4$. 
Also, $\deg([5D]) = 5(r-1)-3r = 2r -5 >0$ if $p/q \le 3/5$ and 
$\deg([5D]) = 5(r-1)-4r = r-5$ if $p/q > 3/5$. Thus if $r \ge 4$, then 
$\deg ([nD])\ge -1$ for all $n\ge 3$ and thus $a(R)=2$. 
\par 
If $r=3$ and if $(1+2s)/(2+3s) < p/q \le (3+2s)/(5+3s)$, then we can see that 
$\deg[ (3s+2) D] = 2 (3s+2) - 3(2+2s) = -2$ and  $\deg[ (3s+5) D] >0$ 
and we have $a(R) = 3s+2$.  In this case, we have 
$\dim_k [H^2_{\m}(R)]_n = 1$ for 
$n = 2,5, \ldots , 3s+2$ and we have $p_g(R)=s+1$.  
\par \vspace{1mm}
(6) We compute 
\begin{eqnarray*}
K_C + \FRC(D) - 2D 
& \sim & -2Q + \left( \dfrac{q-1}{q}+ \dfrac{2p}{q}\right) (P_1+\ldots + P_r) - 2(r-1)Q  \\[1mm]
&\sim & \dfrac{2p-q-1}{q}(P_1+\ldots + P_r). 
\end{eqnarray*}
From this we see that $R$ is Gorenstein with $a(R)=2$ if and only if $2p=q+1$. 
\par \vspace{1mm}
(7) If $(2+3s)p=(1+2s)q+1$ holds, then we can check that $(2+3s)D \sim K_C + \FRC(D)$. 
Conversely, if $(2+3s)D \sim K_C + \FRC(D)$, we have $(2+3s)p=(1+2s)q+1$. 
\par \vspace{1mm}
(8) If $p/q = 5/8$ and $r\ge 4$, then we see that $5D \sim (r-5)Q +\FRC(D) 
\sim (K_C +\FRC(D)) + (r-3) Q$. We can see that $R$ is generated by $R_3, R_4, R_8$
 (and $R_5$ if $r\ge 5$) and we can show for these $R_n$, we can show that 
 $R_n = [K_R]_2\cdot [\KiR]_{n-2}$ for $n\ne 8$ and $R_8 =  [K_R]_3\cdot [\KiR]_5$. 
\end{proof}

\begin{rem} Among the fractions which satisfy $1/2 < p/q \le 2/3$ and does not satisfy 
$2p=q+1, 5p=3q+1$ with the smallest $q$  is $10/17$. If we put 
$D= (r-1)Q -10/17 (P_1+ P_1+\ldots + P_r)$, then we have $[5D] \sim (2r-5)Q$ and 
$[-K_C - \FRC(D) + 7D ] \sim (r-5)Q$.  This means that $[\KiR]_7\cdot [K_R]_{-2}
\subsetneq R_5$ and since $[\KiR]_n =0$ for $n\le 4$, this shows that 
$R$ is not nearly Gorenstein. \par
This suggests that $p/q$ which gives nearly Gorenstein elliptic singularities
 in our setting is rather restricted.    
\end{rem}

We note here the criterion for $R(C,D)$ to be 
 an elliptic singularity given by M. Tomari.

\begin{prop}[\textrm{Tomari \cite[Corollary 3.9]{tomari.max}}]\label{Tomari}
Let $R=R(C,D)$  be as above. 
For every $n\in \Z_{\ge 0}$, we denote by $[nD]$ the  integral part of $nD$. 
Let $A$ be the localization of $R(C,D)$ with respect to $\m$. 
Then $A$ is an elliptic singularity if and only if  one of the following conditions holds:
\begin{enumerate}
\item $C$ is an elliptic curve and $\deg [D] \ge 0$.
\item $C$ is a rational curve and there exist integers 
$1\le m_1 < m$ such that 
$m$ is the minimum of positive integers with $\deg [mD] \ge 0$, 
$\deg [m_1D] = -2$ and $\deg [iD] = -1$  for $1 \le i <m$ with $i\ne m_1$.
\end{enumerate}
Note that the conditions above depend only on the genus $g=g(C)$ and the degrees of $[nD]$ 
$($$n\ge 1$$)$.
\end{prop}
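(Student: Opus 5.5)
The plan is to work on the minimal good resolution $\pi\colon X\to\Spec(R)$ with the star-shaped exceptional set $\bbE$ of \ref{star-resol}. I will use the characterization in \ref{pa(Z)}: $A$ is elliptic if and only if $p_a(Z_f)=1$. So it suffices to compute $Z_f$ and $p_a(Z_f)$ in terms of $g=g(C)$ and the integers $\deg[nD]$.

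\textbf{Step 1: the cycles $Z_n$.} For each $n\ge 1$, let $Z_n$ be the smallest cycle whose coefficient of $C$ is $n$ and which satisfies $Z_n E_{i,j}\le 0$ for every $i,j$. On the chain $E_{i,1}\cup\cdots\cup E_{i,s_i}$ attached to $P_i$, this cycle is determined by a Laufer-type computation along the chain. The standard continued-fraction computation (as in Pinkham's treatment of Demazure's construction) shows that the coefficient of $E_{i,1}$ in $Z_n$ is $\lceil np_i/q_i\rceil$. Combined with $C^2=-\deg B$, this gives
\[
-Z_n\cdot C \;=\; n\deg B-\sum_i \lceil np_i/q_i\rceil \;=\; \deg[nD].
\]

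\textbf{Step 2: identify $Z_f$.} By Laufer's algorithm, $Z_f=Z_m$, where $m$ is the least $n\ge1$ with $Z_n\cdot C\le 0$. By Step 1 this is the least $n$ with $\deg[nD]\ge 0$; such $n$ exists since $D$ is ample.

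\textbf{Step 3: compute $p_a(Z_m)$ telescopically.} I will write $Z_{n+1}=Z_n+Y_n$, where $Y_n$ is $C$ plus the (reduced) chain parts that are added. Then I apply
\[
p_a(Z_n+Y_n)=p_a(Z_n)+p_a(Y_n)+Z_nY_n-1 .
\]
Here $Y_n$ is a tree of curves with $p_a(Y_n)=g$. The intersection $Z_nY_n$ equals $Z_n\cdot C=-\deg[nD]$, because $Z_n$ is orthogonal to the added chain components except at the boundary, and I will check that those boundary terms cancel. Starting from $p_a(Z_1)=g$, this yields
\[
p_a(Z_f)=mg-\sum_{i=1}^{m-1}\bigl(\deg[iD]+1\bigr).
\]

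\textbf{Step 4: the case analysis.} By minimality of $m$, each term $-(\deg[iD]+1)$ with $i<m$ is $\ge 0$.
\begin{itemize}
\item If $g\ge 2$, then $p_a(Z_f)\ge mg\ge 2$, so $A$ is not elliptic.
\item If $g=1$, then $p_a(Z_f)=1$ forces $m=1$, that is, $\deg[D]\ge 0$. Conversely, $\deg[D]\ge 0$ gives $m=1$ and $p_a(Z_f)=1$. This is condition (1).
\item If $g=0$, then $p_a(Z_f)=1$ means that exactly one $i<m$ has $\deg[iD]=-2$ and all other $i<m$ have $\deg[iD]=-1$. This is condition (2).
\end{itemize}

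The main obstacle is the exact bookkeeping in Steps 1 and 3. One has to verify that the minimal branch completions are given by the ceilings $\lceil np_i/q_i\rceil$, and that the increment $Z_nY_n$ contributes exactly $-\deg[nD]$ with no extra terms from the chains. I would first check both claims on a single chain with $q/p=[[b_1,\dots,b_s]]$ by induction on $s$, using the recursion for continued fractions.
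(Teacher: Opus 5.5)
The paper gives no proof of this proposition. It is quoted from Tomari's Corollary 3.9 and used as a black box, so there is no in-paper argument to compare with. Your argument is a correct, self-contained proof via the fundamental cycle of the star-shaped resolution.

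\textbf{Steps 1, 2 and 4.} Step 1 works as you plan. Write $q=b_1p-p'$ with $p/p'=[[b_2,\dots,b_s]]$. Induction on the chain length shows that the least admissible coefficient of $E_{i,1}$ is the least integer $x$ with $n\le b_{i,1}x-\lceil xp'/p\rceil=\lfloor xq_i/p_i\rfloor$, which is $\lceil np_i/q_i\rceil$. Step 2 holds because $Z_f\ge Z_c$, where $c$ is the $C$-coefficient of $Z_f$, gives $Z_c\cdot C\le Z_f\cdot C\le 0$. Step 4 then follows directly from your formula.

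\textbf{Step 3.} Your remark about boundary terms that cancel is not the right picture: there are no boundary terms. Note first that $Z_n+C\le Z_{n+1}$, since $Z_{n+1}-C$ is admissible for $n$. Run Laufer's algorithm from $Z_n+C$, adding only chain curves.
\begin{itemize}
\item Since $(Z_n+C)\cdot E_{i,1}=Z_n\cdot E_{i,1}+1$, the curve $E_{i,1}$ is added if and only if $Z_n\cdot E_{i,1}=0$.
\item Once $E_{i,1},\dots,E_{i,k}$ have been added, $E_{i,k+1}$ is added if and only if $Z_n\cdot E_{i,k+1}=0$.
\item Every curve already added has intersection $\le 2-b_{i,j}\le 0$ with the current cycle, so it is never added again.
\end{itemize}
Hence $Y_n=C+\sum_i(E_{i,1}+\cdots+E_{i,k_i})$ is reduced, and on each branch it is an initial segment, which gives $p_a(Y_n)=g$. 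Moreover $Z_n\cdot E=0$ for every chain curve $E\le Y_n$, so $Z_n\cdot Y_n=Z_n\cdot C=-\deg[nD]$ exactly.

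In sheaf terms, your telescoping is $\chi(\cO_{Z_{n+1}})=\chi(\cO_{Z_n})+\chi(\cO_{Y_n}(-Z_n))$ with $\chi(\cO_{Y_n}(-Z_n))=\chi(\cO_C([nD]))$. This gives $p_a(Z_f)=1-\sum_{n=0}^{m-1}\chi(\cO_C([nD]))$. That explicit formula yields more than the proposition; for example, it also gives the rationality criterion $g=0$ and $\deg[iD]=-1$ for $1\le i<m$.

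\textbf{A harmless caveat.} When $C\cong\PP^1$, $r\le 2$ and $\deg B=1$, the star-shaped configuration is not the minimal good resolution. This does not affect your argument, because $p_a(Z_f)$ does not depend on the choice of resolution.
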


\par 
We study normal graded ring $R(C,D)$ which are 2-dimensional elliptic singularities. 

\begin{prop}\label{ell1}
Let $R=R(C,D)$, where $C$ is an elliptic curve, 
$D = B - \sum_{i=1}^r (p_i/q_i)P_i$ as 
in equation $(4.1)$ and we assume that $r>0$ and $\deg B > r$. 
If $R$ is nearly Gorenstein, then $p_i=1$ for every $1\le i \le r$. 
\end{prop}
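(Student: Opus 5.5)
The plan is to show that the only pair of degrees whose product can contribute to $R_1$ in $\Tr_R(K_R)$ is $[K_R]_0\cdot[\KiR]_1$, and then to read off $p_i=1$ from the coefficient at each $P_i$. I use the explicit descriptions of $K_R$ and $\KiR$ from \ref{3.1}, with $K_C\sim 0$ since $C$ is elliptic. Put $e=\deg D$ and $f=\deg \FRC(D)=\sum_{i=1}^r (q_i-1)/q_i$. Then $f>0$ because $r>0$.

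First I fix the numerical situation. Since $\deg B\ge r+1$ and $0<p_i/q_i<1$, we get $\deg [D]=\deg B-r\ge 1$, so $R_1\neq 0$ and $m(R)=1$. Moreover
\[
e=\deg B-\sum_i \frac{p_i}{q_i}\ \ge\ 1+\sum_i\Bigl(1-\frac{p_i}{q_i}\Bigr)\ >\ \sum_i\Bigl(1-\frac{1}{q_i}\Bigr)=f ,
\]
so $0<f/e<1$. On an elliptic curve, $H^0$ of an integral divisor is nonzero only if its degree is positive or it is linearly trivial.

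Next I show $K_R$ is not free and pin down the relevant degrees. If $K_R$ were free, its generator in degree $n$ would force $\FRC(D)+nD\sim 0$, hence $f+ne=0$. This is impossible, since $f/e\notin\Z$. So $R$ is not Gorenstein, and nearly Gorenstein gives $\Tr_R(K_R)=\m_R$. A product $[K_R]_n\cdot[\KiR]_{n'}$ lands in $R_1$ only if $n'=1-n$. Nonvanishing of $[K_R]_n$ requires $f+ne\ge 0$, and nonvanishing of $[\KiR]_{1-n}$ requires $(1-n)e-f\ge 0$. Together these give $-f/e\le n\le 1-f/e$, so $n=0$. Hence $R_1\subset \Tr_R(K_R)$ forces $[K_R]_0\cdot[\KiR]_1=R_1$.

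The key step is a local computation at each $P_i$. The product of a section of $[\FRC(D)]$ and a section of $[-\FRC(D)+D]$ is a section of $[\FRC(D)]+[-\FRC(D)+D]\le [D]$. At $P_i$, $[D]$ has coefficient $-1$. The sum divisor has coefficient
\[
\Bigl\lfloor \frac{q_i-1}{q_i}\Bigr\rfloor+\Bigl\lfloor\frac{-(q_i-1)-p_i}{q_i}\Bigr\rfloor = 0+\Bigl\lfloor -\frac{q_i-1+p_i}{q_i}\Bigr\rfloor ,
\]
which equals $-2$ as soon as $p_i\ge 2$. In that case the whole product $[K_R]_0\cdot[\KiR]_1$ lies in $H^0(C,\cO_C([D]-P_i))$. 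By Riemann–Roch on the elliptic curve this has dimension $\deg[D]-1<\deg[D]=\dim_k R_1$, a proper subspace of $R_1$. That contradicts the previous step, so every $p_i=1$.

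I expect the main obstacle to be the inequality $e>f$, because this is exactly where $\deg B>r$ enters: it rules out every pair of degrees other than $(0,1)$. Without it, several values of $n$ could contribute to $R_1$, and the local obstruction at $P_i$ would have to be checked for each $n$ separately. I would also double-check the Riemann–Roch dimension count in the edge case $\deg[D]=1$, where the subspace is $0$ and the conclusion still holds.
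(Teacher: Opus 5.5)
Your plan (reduce to $[K_R]_0\cdot[\KiR]_1=R_1$, then compare coefficients at $P_i$) is the paper's plan. The step you flagged as an edge case is a genuine failure, and it is in fact a counterexample to the statement.

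When $\deg B=r+1$ we have $\deg[D]=1$, so $R_1=H^0(C,\cO_C([D]))$ is one-dimensional and $[D]-P_i$ has degree $0$. On an elliptic curve a degree-$0$ divisor has $h^0=1$ exactly when it is principal. So if $[D]\sim P_i$, then $H^0(C,\cO_C([D]-P_i))$ is a one-dimensional subspace of the one-dimensional space $H^0(C,\cO_C([D]))$. It therefore equals $R_1$, and your contradiction disappears. The paper's proof has the same gap: it compares $\dim_k R_1$ with $h^0([-\FRC(D)+D])$ and tacitly assumes that dropping a point lowers $h^0$.

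Concretely, take $D=\frac43 P$, i.e.\ $B=2P$, $r=1$, $p_1/q_1=2/3$, and choose $K_C=0$. Up to the factor $T^n$:
$R_n=H^0(\lfloor 4n/3\rfloor P)$, $[K_R]_n=H^0(\lfloor (4n+2)/3\rfloor P)$ and $[\KiR]_n=H^0(\lfloor (4n-2)/3\rfloor P)$.
One checks the following.
$[K_R]_0[\KiR]_1=k=R_1$.
$[K_R]_0[\KiR]_n=R_n$ for $n\equiv 2\pmod{3}$.
$R_3=H^0(4P)=H^0(3P)+H^0(2P)\cdot H^0(2P)$.
$[K_R]_1[\KiR]_{n-1}=H^0(2P)\cdot H^0((\lfloor 4n/3\rfloor-2)P)=R_n$ for $n\equiv 0,1\pmod{3}$, $n\ge 4$. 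This holds because the pole-order sets $\{0,2,\dots,a\}+\{0,2,\dots,b\}$ cover $\{0,2,\dots,a+b\}$ once $a\ge 2$ and $b\ge 3$.
Since $[K_R]_n=0$ for $n<0$ and $[\KiR]_0=0$, the trace has no degree-$0$ part. Hence $\Tr_R(K_R)=\m_R\ne R$: $R$ is nearly Gorenstein and not Gorenstein, yet $p_1=2$.

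What your argument, and the paper's, actually proves is weaker. Every $p_i=1$ except possibly a single index $i$, and that exception can occur only when $\deg B=r+1$ and $B-\sum_j P_j\sim P_i$. In particular the proposition holds if $\deg B\ge r+2$.

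Separately, your inequality $e>f$ is false in general. Termwise $1-p_i/q_i\le 1-1/q_i$, so your chain needs $\sum_i (p_i-1)/q_i<1$. For example, $r=3$, $\deg B=4$ and all $p_i/q_i=3/4$ give $e=7/4<f=9/4$. In such cases your $\Q$-degree bounds no longer isolate $n=0$, and your non-freeness argument ($f/e\notin\Z$) loses its justification.

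The repair is the paper's route: use floors rather than $\Q$-degrees.
$\lfloor \FRC(D)-D\rfloor=\sum_j P_j-B$ has negative degree, so $[K_R]_{-1}=0$.
$\lfloor -\FRC(D)\rfloor=-\sum_j P_j$, so $[\KiR]_0=0$.
Since $R_1\neq 0$ and multiplication by a nonzero homogeneous element is injective, $[K_R]_n=0$ for $n<0$ and $[\KiR]_n=0$ for $n\le 0$.
Also $[K_R]_0=H^0(\cO_C)=k$.
This gives $a(R)=0$, $m(R)=1$, and non-Gorensteinness, since the trace has no degree-$0$ part. It also shows that only $[K_R]_0[\KiR]_1$ can meet $R_1$. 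This, not $e>f$, is where $\deg B>r$ really enters.
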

\begin{proof}  Then $\dim R_1 = \deg B - r >0, [K_R]_0=k$ and 
it is easy to see that $[K_R]_n = 0$ for $n< 0$. 
Hence we have $a(R)=0$ and $m(R)=1$. 
 Hence if $R$ is nearly Gorenstein,  by Proposition \ref{a(R)b(R)}, we must have 
\[
b(R)=1 \quad {\rm and} \quad \dim_k R_1 = \dim_k [\KiR]_1= 
\dim_k H^0(\cO_C([-\FRC(D) +D])).
\] 
Now, $[-(q_i -1)/q_i - p_i/q_i]= -1$ or $-2$ and $-1$ is attained only if $p_i=1$. 
Thus the condition 
$\dim_k R_1 = \dim_k [\KiR]_1 = \dim_k H^0(\cO_C([-\FRC(D) +D]))$ 
implies $p_1=1$ for every $i$. 
\end{proof}

\par  
We think that there are very few normal graded rings which are non-Gorenstein 
 nearly Gorenstein elliptic singularities.
 
We give one such example.

\begin{ex}  Let $C$ be an elliptic curve and 
let $P_1,\ldots, P_r$ ($r\ge 3$) are different points of $C$. 
Put $B=P_1+\cdots + P_r$ and let 
\[ 
D = 2B -\dfrac{1}{2} B 
\]
Then $R=R(C,D)$ is nearly Gorenstein but not Gorenstein.
\par
Actually, if we put $\cA = R(C,B)$, then $\cA$ is a Gorenstein graded ring generated by $\cA_1$.  
Moreover,  $R = k[ \cA_1T, \cA_3T^2]$ and we can see $R_1= [K_R]_0\cdot [\KiR]_1$ and 
$R_2 = [K_R]_1\cdot [\KiR]_1$. 
\end{ex}

\medskip
\section{2-dimensional \lq\lq cone singularities"}
\label{s:cone}

In this section, let $R=R(C,D)$, where $C$ is a smooth curve of genus $g$ and $D$ is an
 {\bf integral} ample divisor on $C$ .  
We call such $R(C,D)$ \lq\lq cone singularity". 
\par 
If $g=0$ and $\deg(D)=d$, then $R$ is the $d$-th Veronese 
 subring of a polynomial ring $k[X,Y]$ and is nearly Gorenstein (\cite{HHS}), and if $g=1$, 
 then $R(C,D)$ is Gorenstein since it is a \lq\lq simple elliptic singularity". 
 So we assume $g\ge 2$ in the following.  
Also we assume that {\bf $k$ is algebraically closed}. 
\par 
Moreover, we write $D\sim D'$ for $D, D'\in \Div(C)$ if $D - D' = \di_C(f)$ for some $f\in k(C)$,
or, equivalently, $\cO_C(D)\cong \cO_C(D')$. 
For $D \in \Div(C)$, we write 
\[
h^i(D)= h^i(C,\cO_C(D))=\dim_k H^i(C,\cO_C(D)) \;\;\;\text{for $i=0,1$}. 
\]

\par 
We recall \lq\lq Riemann-Roch Theorem", 
which plays very important role in this 
section. 

\begin{thm}\label{RRTh} Let $D\in \Div(C)$. Then we have  
\begin{enumerate}
\item $($\textbf{Riemann-Roch Theorem}$)$
\[
h^0(D)-h^1(D)=\deg(D) +1-g.
\]
\item $($\textbf{Serre Duality}$)$ $h^1(D) = h^0(K_C-D)$, where $K_C$ is the canonical bundle of $C$. 
Note that $\deg(K_C)= 2g-2$, $h^0(K_C)=g$ and $h^1(K_C)=1$. 
\item If $\deg D = 2g-2$, then $h^0(D)\ge g-1$, 
and $h^0(D)=g$ if and only if $D\sim K_C$.
If $\deg D \ge 2g-1$, then $h^0(D)= \deg D - g+1$.  
\item When $g \ge 1$, $h^0(D) \le \deg(D)$. 
\end{enumerate}
\end{thm}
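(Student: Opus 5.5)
We derive parts (1)–(4) of Theorem \ref{RRTh} from the Riemann–Roch formula together with Serre duality. Since (1) and (2) are the classical statements on a smooth projective curve over an algebraically closed field, we take them as given. We begin with the basic fact that for a divisor $E$ on $C$, $h^0(E)=0$ when $\deg E<0$, and that when $\deg E=0$ we have $h^0(E)\le 1$, with equality if and only if $E\sim 0$. Both follow because a nonzero $f\in H^0(\cO_C(E))$ gives an effective divisor $\di_C(f)+E$ of degree $\deg E$. This is exactly properties (1) and (2) of Definition \ref{degD} for curves.

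For (3), take $\deg D=2g-2$. Riemann–Roch gives $h^0(D)=g-1+h^1(D)$, so $h^0(D)\ge g-1$. By Serre duality, $h^1(D)=h^0(K_C-D)$, and $\deg(K_C-D)=0$. By the basic fact above, $h^1(D)\in\{0,1\}$, and $h^1(D)=1$ if and only if $K_C-D\sim 0$. Hence $h^0(D)=g$ if and only if $D\sim K_C$. If $\deg D\ge 2g-1$, then $\deg(K_C-D)<0$, so $h^1(D)=0$, and Riemann–Roch gives $h^0(D)=\deg D-g+1$.

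For (4), assume $g\ge1$. If $h^0(D)=0$ there is nothing to prove, since then $\deg D\ge 0$ is not needed for the inequality to hold trivially when $\deg D\ge0$; and if $\deg D<0$ then $h^0(D)=0\le\deg D$ fails only formally, so we may restrict to the case $h^0(D)>0$, where $D\sim D'\ge 0$ with $\deg D'=\deg D$. We then induct on $\deg D'$.
\begin{itemize}
\item[$\circ$] Base case $\deg D'=0$: here $D'=0$ and $h^0=1$. This case is degenerate. The inequality $h^0(D)\le\deg D$ as written fails for $D=0$, so the intended reading must be for $\deg D\ge1$, or in the form $h^0(D)\le\deg D$ whenever $D\not\sim0$. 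We prove the statement for $\deg D\ge 1$.
\item[$\circ$] Degree one: if $\deg D=1$ and $h^0(D)\ge 2$, then two independent sections would give a degree-one map $C\to\PP^1$, i.e.\ an isomorphism, contradicting $g\ge1$. So $h^0(D)\le 1=\deg D$.
\item[$\circ$] Inductive step: for a point $P$, the exact sequence $0\to\cO_C(D-P)\to\cO_C(D)\to k_P\to0$ gives $h^0(D)\le h^0(D-P)+1$. Iterating down to degree one yields $h^0(D)\le 1+(\deg D-1)=\deg D$.
\end{itemize}

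The only delicate point is the degree-one step, which is where $g\ge1$ is used; everything else is routine manipulation of Riemann–Roch and duality.
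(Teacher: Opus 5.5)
The paper gives no proof of this theorem; it recalls Riemann--Roch, Serre duality and their standard consequences as classical facts, so there is no argument of the authors' to compare against. Your derivation is correct. Part (3) follows exactly as you say from (1), (2) and the fact that a degree-zero divisor $E$ has $h^0(E)\le 1$, with equality iff $E\sim 0$; for the bound you also need $H^0(C,\cO_C)=k$. Part (4) follows from the degree-one bound together with $h^0(D)\le h^0(D-P)+1$.

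You are right that (4) is false as literally written. However, it fails genuinely, not only formally, for every $D$ with $\deg D<0$ (where $0>\deg D$) and for $D\sim 0$ (where $1>0$). The correct hypothesis is $\deg D\ge 1$, and that is the only way the paper uses it. For instance, the proof of Theorem \ref{g=3}(1) uses $h^0(K_C-3D)\le 1$ for $\deg(K_C-3D)=1$ and $h^0(3D)\le 3$ for $\deg 3D=3$. Your opening sentences for (4) should simply say this, instead of the garbled remark about the case $\deg D<0$.

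Two small cleanups. First, the reduction to an effective $D'$ and the induction are unnecessary. The inequality $h^0(D)\le h^0(D-P)+1$ holds for arbitrary $D$, and your degree-one argument applies to every divisor of degree one, so a direct descent to degree one suffices; this is in fact what your last step does.

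Second, in the degree-one step it is cleaner to argue as follows. If $h^0(D)\ge 2$, then $D\sim P$ for a point $P$, and there is a nonconstant $f$ with $\di_C(f)+P\ge 0$, i.e. a rational function with a single simple pole. Then $f\colon C\to\PP^1$ has degree $1$, so $C\cong\PP^1$, contradicting $g\ge 1$. This avoids having to check that the pencil spanned by two sections is base-point free, which your version uses implicitly.
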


\par 
First, we show that if $\deg(D)$ is very big relative to $g$, then $R(C,D)$ is nearly 
Gorenstein. Recall that we showed in 
Proposition \ref{a(R)b(R)} (2), (3) if $R=k[R_1]$,  
then $R$ is nearly Gorenstein but not Gorenstein if and only if 
$-a(R)+b(R)=1$ and 
 $R_1= [K_R]_{-a(R)}\cdot [\KiR]_{b(R)}$. 
Since $R(C,D)$ is generated by $R_1$ if $\deg D\ge 2g+1$ by \cite{Mum}, 
we have the following result by Proposition \ref{a(R)b(R)} (2), (3).

\begin{cor}\label{ge 2g+1} If $\deg(D)\ge 2g+1$, then $R(C,D)$ is nearly Gorenstein 
if and only if the product mapping 
\[H^0(C, \cO_C(K_C))\otimes_k H^0(C, \cO_C(D - K_C)) \to H^0(C, \cO_C(D))\]
is surjective. 
\end{cor}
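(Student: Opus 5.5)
Since $\deg D\ge 2g+1$, Mumford's result \cite{Mum} gives that $R=R(C,D)$ is standard graded, $R=k[R_1]$, so $m(R)=1$. With $D$ integral we have $\FRC(D)=0$. By \ref{3.1},
\[
[K_R]_n=H^0(C,\cO_C(K_C+nD))\quad\text{and}\quad [\KiR]_n=H^0(C,\cO_C(-K_C+nD)).
\]
The plan is to compute $a(R)$ and $b(R)$ explicitly and then reduce to Proposition \ref{a(R)b(R)} (2) and (3).

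\textbf{Computing $a(R)$ and $b(R)$.} Since $g\ge 2$, $H^0(K_C)\ne 0$, so $[K_R]_0\neq 0$. For $n<0$ we have $\deg(K_C+nD)\le 2g-2-(2g+1)<0$, hence $[K_R]_n=0$. Thus $a(R)=0$ and $[K_R]_{-a(R)}=H^0(C,\cO_C(K_C))$.

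For $\KiR$, we have $\deg(nD-K_C)<0$ for $n\le 0$ (using $g\ge 2$). For $n=1$, $\deg(D-K_C)\ge 3>0$, and in fact $\deg(D-K_C)\ge 2g-1$ exactly when $\deg D\ge 4g-3$. In general, Riemann--Roch only gives
\[
h^0(D-K_C)\ge \deg D-3g+3,
\]
which is positive for $\deg D\ge 2g+1$ only when $g\le 3$. So $b(R)=1$ is not automatic. I would handle this by noting that we only need the equivalence. If $[\KiR]_1=0$, then the product map in the statement has zero source and cannot be surjective onto $R_1\ne 0$; on the other side, $b(R)\ge 2$ gives $-a(R)+b(R)\ge 2>m(R)=1$, so $R$ is not nearly Gorenstein by Proposition \ref{a(R)b(R)} (2). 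Both sides therefore fail together. If $[\KiR]_1\ne 0$, then $b(R)=1$, since $[\KiR]_n=0$ for $n\le 0$.

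\textbf{Main case $b(R)=1$.} Here $-a(R)+b(R)=1=m(R)$. The image of the product map
\[
H^0(C,\cO_C(K_C))\otimes_k H^0(C,\cO_C(D-K_C))\to H^0(C,\cO_C(D))
\]
is precisely $[K_R]_{-a(R)}\cdot[\KiR]_{b(R)}\subseteq R_1$.

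If $R$ is nearly Gorenstein, Proposition \ref{a(R)b(R)} (2) gives $[K_R]_0\cdot[\KiR]_1=R_1$, i.e. the map is surjective. Here nearly Gorenstein gives $\Tr_R(K_R)\supseteq\m_R$; if $\Tr_R(K_R)=R$ then $K_R$ is free, which would force $-a(R)+b(R)=0$ by Proposition \ref{a(R)b(R)} (1), contradicting $-a+b=1$. So $\Tr_R(K_R)=\m_R$ and (2) applies.

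Conversely, if the map is surjective, then since $R$ is standard graded, Proposition \ref{a(R)b(R)} (3) yields $\Tr_R(K_R)=\m_R$. It remains to know that $R$ is Cohen--Macaulay, which holds because $R$ is a two-dimensional normal domain. Hence $R$ is nearly Gorenstein.

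The only delicate point is the possibility $[\KiR]_1=0$, which the degenerate-case argument above disposes of. Everything else is bookkeeping of degrees with $\FRC(D)=0$.
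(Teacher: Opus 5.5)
Your proof is correct and follows the paper's argument. Mumford's theorem makes $R$ standard graded, and then Proposition \ref{a(R)b(R)} (2), (3), with $a(R)=0$ and $m(R)=1$, give the equivalence; your extra bookkeeping (the case $[\KiR]_1=0$, ruling out $\Tr_R(K_R)=R$ via part (1), and Cohen--Macaulayness in dimension $2$) only makes explicit what the paper's one-line proof leaves implicit.
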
 
  
\begin{thm}\label{high-deg} 
Let $C$ be a smooth curve of genus $g\ge 2$ and 
$D$ be an integral ample divisor on $C$. 
We put $R = R(C,D)$ and assume 
that $\deg D > 2g -2 = \deg K_C$.
\par
\begin{enumerate}
\item If $H^1(C, \cO_C(D - 2 K_C))=0$, then 
we have $H^0(C, \cO_C(D))= H^0(C, \cO_C(D-K_C))\cdot H^0(C, \cO_C(K_C))$.
In particular, if $H^1(C, \cO_C(D - 2 K_C))=0$, then $R=R(C,D)$ is nearly Gorenstein.  
\item If $\deg D > 6g -6$, then $R$ is nearly Gorenstein. 
\item If $\deg D = 6g-6$, then $R$ is nearly Gorenstein except for the 
case $g=2$ and $D \sim 3K_C$. 
\item If $\deg D > 3g -3$, then $R_n = [\KiR]_n [K_R]_0$ for every $n\ge 2$. 
Namely, $\Tr_R(K_R) \supset \m_R^2$ in this case.  
\end{enumerate}
\end{thm}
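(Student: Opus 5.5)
The plan is to read everything off the explicit descriptions in \ref{3.1}. Since $D$ is integral, $\FRC(D)=0$, so
\[
[K_R]_n=H^0(C,\cO_C(K_C+nD))T^n, \qquad [\KiR]_n=H^0(C,\cO_C(nD-K_C))T^n .
\]
Because $\deg D>2g-2$, we have $H^0(K_C-D)=0$, hence $[K_R]_{-1}=0$ and $[K_R]_0=H^0(K_C)\neq 0$, so $a(R)=0$. Moreover $[\KiR]_0=H^0(-K_C)=0$ since $g\ge 2$.

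Therefore $\Tr_R(K_R)\supset\m_R$ holds provided that for every $n\ge 1$
\[
R_n=[K_R]_0\cdot[\KiR]_n=H^0(K_C)\cdot H^0(nD-K_C)
\]
inside $H^0(nD)$. Conversely, in degree $1$ the only possible contributions are $[K_R]_j[\KiR]_{1-j}$ with $j\ge 0$. Since $[\KiR]_{\le 0}=0$, only $j=0$ contributes, so the condition is also necessary in degree $1$.

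The main tool will be the base-point-free pencil trick in Green's form (Castelnuovo's lemma). If $L$ is a base-point-free line bundle and $F$ is a line bundle with $H^1(F\otimes L^{-1})=0$, then the multiplication map $H^0(L)\otimes H^0(F)\to H^0(L\otimes F)$ is surjective. We apply it with $L=\cO_C(K_C)$, which is base-point free for $g\ge 2$, and $F=\cO_C(nD-K_C)$. The required vanishing is $H^1(nD-2K_C)=0$.

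For $n=1$ this is exactly the hypothesis of (1), which gives the first assertion of (1). To obtain nearly Gorenstein in (1), we also need the degrees $n\ge 2$. Since $\deg D>2g-2$, Riemann--Roch gives $h^0(D)\ge 1$, so we may take an effective $D'\sim D$. Then
\[
H^1(nD-2K_C)\cong H^0(3K_C-D-(n-1)D')^{\vee}\subset H^0(3K_C-D)^{\vee}=0 .
\]

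For (4), with $n\ge 2$ we have $\deg(nD-2K_C)\ge 2\deg D-4g+4>2g-2$ whenever $\deg D>3g-3$. Hence $H^1(nD-2K_C)=0$, and the trick gives $R_n=[K_R]_0[\KiR]_n$ for all $n\ge 2$.

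For (2), $\deg(3K_C-D)<0$, so $H^1(D-2K_C)=H^0(3K_C-D)^\vee=0$, and (1) applies.

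For (3), $\deg(3K_C-D)=0$. If $D\not\sim 3K_C$, then $h^0(3K_C-D)=0$ and (1) applies again. The remaining case $D\sim 3K_C$ is the main obstacle. Degrees $n\ge 2$ are covered by (4), since $6g-6>3g-3$. So everything reduces to whether
\[
H^0(K_C)\otimes H^0(2K_C)\to H^0(3K_C)
\]
is surjective.

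For $g\ge 3$ I would invoke the classical generation of the canonical ring. In the non-hyperelliptic case this is Max Noether's theorem. In the hyperelliptic case I would check directly, using $H^0(K_C)=\mathrm{Sym}^{g-1}H^0(g^1_2)$ and the extra generators of $H^0(2K_C)$, that degree $3$ is generated by degrees $1$ and $2$.

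For $g=2$, the pencil trick computes the kernel of $H^0(K_C)\otimes H^0(2K_C)\to H^0(3K_C)$ as $H^0(2K_C-K_C)=H^0(K_C)$, which has dimension $2$. So the image has dimension $2\cdot 3-2=4<5=h^0(3K_C)$. Since only $j=0$ contributes in degree $1$, we get $R_1\not\subset\Tr_R(K_R)$, and this case is a genuine exception.
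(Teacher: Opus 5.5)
Your proposal is correct. For (1), (2), (4) and the case $D\not\sim 3K_C$ of (3) it is the paper's argument. Two general sections of $H^0(C,\cO_C(K_C))$ give $0\to\cO_C(nD-2K_C)\to\cO_C(nD-K_C)^{\oplus 2}\to\cO_C(nD)\to 0$, so everything reduces to the vanishing of $H^1(C,\cO_C(nD-2K_C))$. For $n\ge 2$ in (1) there is a small difference: you use $H^0(3K_C-nD)\hookrightarrow H^0(3K_C-D)$ via an effective $D'\sim D$, while the paper first deduces $\deg D\ge 4g-4$ from $h^0(3K_C-D)=0$ and then argues by degree. Both work.

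The genuine difference is the case $D\sim 3K_C$. The paper identifies $R$ with a Veronese subring, $R(C,K_C)^{(3)}$ or $R(C,P)^{(6g-6)}$ for a Weierstrass point $P$. It then quotes Proposition \ref{R(d)stand} (from \cite{HHS}) in the non-hyperelliptic case, and Proposition \ref{hypell} (3), resp. (2), in the hyperelliptic case with $g\ge 3$, resp. $g=2$. You instead let (4) absorb all degrees $n\ge 2$ and reduce to the single multiplication map $H^0(K_C)\otimes H^0(2K_C)\to H^0(3K_C)$. This is more self-contained, and your kernel count $6-2=4<5$ makes the $g=2$ exception explicit rather than quoted.

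The only step you leave as an assertion is the hyperelliptic check for $g\ge 3$, and it is indeed routine. In the notation of Proposition \ref{hypell}, $H^0(K_C)$ consists of the forms of degree $g-1$ in $X^2,Y$. The $Z$-parts of $H^0(2K_C)$ and $H^0(3K_C)$ are $ZX$ times the forms of degree $g-3$, resp. $2g-4$, in $X^2,Y$. So the product is onto exactly when $g\ge 3$.

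Since you already invoke Green's form of the pencil trick, note also that Green's $H^0$-lemma gives this surjectivity directly. Take $W=H^0(K_C)$, with $\dim W=g$ and $h^1(K_C)=1\le g-2$; this covers every $g\ge 3$ at once, hyperelliptic or not, so you do not need Max Noether or the case split.
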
 

\begin{proof} 
Let $w_1, w_2$ be general elements of $H^0( C, \cO_C(K_C))$.
Then since $w_1, w_2$ generates $\cO_C(K_C)$, the mapping 
 $\phi : \cO_C^{\oplus 2} \to \cO_C(K_C) $ defined by $(w_1,w_2)$ is surjective and 
 we have an exact sequence of $\cO_C$-modules
\[ 0 \to \cO_C( nD - 2K_C)) \to \cO_C(nD - K_C)^{\oplus 2} \overset{\phi_n}\to \cO_C( nD) \to 0\]  
for every $n \in \Z$. 
In particular, if $H^1(C, \cO_C(nD-2K_C))=0$, then we have 
$H^0(C, \cO_C(nD)) = (w_1, w_2)H^0(C, \cO_C(nD - K_C))$, that is, we have $R_n = [\KiR]_n [K_R]_0$.  
\par \vspace{2mm}
(1) Suppose $H^1(C, \cO_C(D-2K_C))=0$. 
Then $h^0(3K_C-D)=0$ by Serre duality. 
Theorem \ref{RRTh}(3) implies $\deg(3K_C-D) \le 2g-2$, that is, 
$\deg(D) \ge 4g-4$. 
Then $H^1(C,\cO_C(nD-2K_C))=0$ for all $n \ge 1$.  
By the argument as above, $R_n=[K_R]_0[\KiR]_n \subset \Tr_R(K_R)$ for all $n \ge 1$. 
Hence $R$ is nearly Gorenstein. 
\par  \vspace{1mm}
(2) If $\deg D > 6g -6$, then since $\deg(D - 2K_C) > 2g-2$, 
one can easily see that $H^1(C,\cO_{C}(D-2K_C))=0$. 
Hence $R$ is nearly Gorenstein. 
\par  \vspace{1mm}
(3) Assume that $\deg D=6g-6$. Then, since $\deg (D- 2K_C) = 2g-2$, \par\noindent
$H^1(C, \cO_C(nD - 2K_C))\ne 0$ if and only if $n=1$ and $D \sim 3 K_C$ and otherwise,  
$R_n= [K_R]_1\cdot [\KiR]_{n-1}$.   If $D \sim 3 K_C$ and $C$ is not hyperelliptic, 
then since $R(C, K_C)$ is a standard graded Gorenstein ring, $R=R(C, K_C)^{(3)}$ is 
nearly Gorenstein by  Proposition \ref{R(d)stand}. 
Then assume $D \sim 3 K_C$ and $C$ is hyperelliptic. If $g\ge 3$, then $6g-6 \ge 4g-1$, 
$R$ is nearly Gorenstein by   Proposition \ref{hypell} (3) and if $g=2$, $R$ is not 
nearly Gorenstein by  Proposition \ref{hypell} (2). \par 
\par  \vspace{1mm}
(4) If   $\deg D > 3g -3$, since $\deg nD > 6g-6$ for $n\ge 2$, we have 
$R_n = [\KiR]_n [K_R]_0$ for every $n\ge 2$. 
\end{proof}

\par 
Assume that $R=R(C,D)$ is Gorenstein. 
We ask if the Veronese subring 
$R^{(d)}= \bigoplus_{n\ge 0} R_{dn}$ is Gorenstein.  
If $R=k[R_1]$, the answer is given in \cite{HHS}.   

\begin{prop}\label{R(d)stand} If $R= \bigoplus_{n\ge 0} R_{n}$
 is a Gorenstein graded ring and if 
$R=k[R_1]$, then for every $d\ge 2$, $R^{(d)}$ is nearly Gorenstein. 
\end{prop}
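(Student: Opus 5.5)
Since $R$ is Gorenstein, we fix a homogeneous isomorphism $K_R\cong R(a)$ with $a=a(R)$, that is, $K_R = R\cdot\omega$ with $\deg\omega=-a$. Put $S=R^{(d)}$. By Goto--Watanabe, $K_S = (K_R)^{(d)} = \bigoplus_{n} [K_R]_{dn}$. As a submodule of $Q(R)$ this is $\bigoplus_n R_{dn+a}\,\omega$. Writing $a\equiv -j \pmod d$ with $0\le j<d$ and factoring out the unit $\omega$ of $Q(R)$, we get $K_S\cong M_j:=\bigoplus_{n} R_{dn-j}$ up to a shift, viewed as an $S$-module. Its inverse is $K_S^{-1}\cong M_{-j}=\bigoplus_n R_{dn+j}$. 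This holds because $R$ is normal and $M_j$ is the reflexive rank one $S$-module attached to the residue $-j$. The identification of the inverse should be justified by $M_{j}M_{-j}\subset S$ together with a divisor-class or degree argument on $\Proj$; alternatively one can just use the inclusion $M_{-j}\subset K_S^{-1}$, which is all that is needed below. Hence
\[
\Tr_S(K_S)\supseteq \sum_{n,m} R_{dn-j}\,R_{dm+j}.
\]
If $j=0$ then $S$ is Gorenstein and there is nothing to prove. So we may assume $1\le j\le d-1$, and we set $i=d-j$, so that $1\le i\le d-1$.

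It now suffices to show that $\m_S=\bigoplus_{n\ge1}R_{dn}$ is contained in this sum. Take $n\ge 1$. Since $R=k[R_1]$, every element of $R_{dn}$ is a sum of products of $dn$ elements of $R_1$. Split each such monomial into a product of $i$ factors and $dn-i$ factors. This gives $R_{dn}=R_i\cdot R_{dn-i}$. Here $R_i$ is the degree $i=d\cdot1-j$ piece of $M_j$. Also $dn-i=d(n-1)+j$, so $R_{dn-i}$ is a graded piece of $M_{-j}$, and it is nonzero because $d(n-1)+j\ge j\ge 1$. Therefore $R_{dn}\subset \Tr_S(K_S)$ for all $n\ge 1$. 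It follows that $\m_S\subseteq \Tr_S(K_S)$. Finally, $S$ is Cohen--Macaulay, being a direct summand of the Cohen--Macaulay ring $R$ (a Veronese subring), so $S$ is nearly Gorenstein.

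The main obstacle is making the identifications $K_S\cong M_j$ and $M_{-j}\subset K_S^{-1}$ precise as submodules of $Q(S)\subset Q(R)$, with the correct sign of $j$. Once these are fixed, the surjectivity $R_iR_{dn-i}=R_{dn}$, which holds because $R$ is standard graded, finishes the argument immediately. To avoid proving that $M_{-j}$ is all of $K_S^{-1}$, I will only use the inclusion $M_{-j}\subset K_S^{-1}$. This follows from $M_jM_{-j}\subset \bigoplus_n R_{dn}=S$.
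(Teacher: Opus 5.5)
Your proof is correct. The paper does not prove this proposition itself; it only attributes it to \cite{HHS}. So what you give is a self-contained version of the standard argument, in three steps:
\begin{enumerate}
\item $K_S=(K_R)^{(d)}$ is, up to shift, the Veronese module $M_j=\bigoplus_n R_{dn-j}$.
\item Multiplication by an element of $M_{-j}$ gives an $S$-linear map $M_j\to S$.
\item Standard gradedness of $R$ gives $R_{dn}=R_{d-j}R_{d(n-1)+j}\subseteq M_jM_{-j}$.
\end{enumerate}
You are right that only the inclusion $M_{-j}\subseteq \Hom_S(M_j,S)$ is needed. The identification of $K_S^{-1}$ with $M_{-j}$ can therefore be dropped altogether; it plays no role in the proof.

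There are three minor points. First, $S=k[R_d]$ is itself standard graded and $\Tr_S(K_S)$ is an ideal, so it suffices to check $S_1=R_d=R_{d-j}R_j\subseteq \Tr_S(K_S)$, i.e.\ the case $n=1$. This is the criterion of Proposition~\ref{a(R)b(R)}(3) applied to $S$, with $R_{d-j}$ and $R_j$ the lowest-degree pieces of $M_j$ and $M_{-j}$.

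Second, the Cohen--Macaulay step is fine, but the reason as you stated it is false in general: a direct summand of a Cohen--Macaulay ring need not be Cohen--Macaulay. It works here because $R$ is module-finite over $S$. Hence $R$ is a maximal Cohen--Macaulay $S$-module, and so is its direct summand $S$. Equivalently, $H^i_{\m_S}(S)=[H^i_{\m}(R)]^{(d)}$ by Goto--Watanabe.

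Third, the nonvanishing of $R_{d(n-1)+j}$ that you point out is not needed; the equality $R_{dn}=R_{d-j}R_{dn-d+j}$ already suffices.
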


But if $R$ is not a standard graded ring, the result is different. 

\begin{ex}\label{235+237}[\textrm{cf. \cite{MOWY}}] Let $R= k[X,Y,Z]/(X^2+Y^3+Z^5)$ with $\deg(X,Y,Z) = (15,10,6)$. 
\begin{enumerate}
\item[\rm{(a)}] If $d$ is divisible by either $2$, $3$, $5$, then $R^{(d)}$
is a cyclic quotient singularity and hence is nearly Gorenstein. 
\item[\rm{(b)}] If $d$ is relatively prime with $2$, $3$ and $5$, then $R^{(d)}$ is nearly 
Gorenstein if and only if $d = 1, 7, 11, 17, 19, 29$.  
\end{enumerate}

\par 
The proof is reduced to compute the fundamental cycle of the minimal resolution of $R^{(d)}$. 
Note that if $d>60$ and if $60s < d \le 60(s+1)$, 
then the central curve of 
the minimal good resolution of $\Spec(R^{(d)})$ is a $-(2+s)$ curve.
\end{ex}

\par 
We will give a classification of $D$ such that $R(C,D)$ is nearly 
Gorenstein in the case 
$g = 2,3$.  
For that purpose, the following $2$ results are very important. 
\par
The question \lq\lq when is $R^{(d)}$ nearly Gorenstein" gives very 
complicated and interesting answer.  

\begin{prop}\label{hypell} 
Let $g\ge 2$ and $R= k[X,Y,Z]/(Z^2 + f_{2g+ 1}(X^2,Y))$ with degree 
$\deg (X,Y,Z) = (1,2, 2g+1)$, where $f_{2g+1 }(X^2,Y)$ is a 
homogeneous polynomial of 
degree $2g+1$ of $(X^2, Y)$ with no multiple roots.
Note that $a(R)= 2g-2$ in this case. 
This is a cone singularity $R(C,P)$, where 
$C$ is a hyperelliptic curve of genus $g$ and $P\in C$ is a hyperelliptic point 
$($$\dim_k( H^0(C, \cO_C(2P))=2)$. 
Note that $R^{(d)}$ is Gorenstein if and only if $d \,|\, (2g-2)$.   
\par
In the following, we always assume that $d$ does not divide $2g-2$. 
\begin{enumerate}
\item[(1)] If $d=3$ or $4$, then $R^{(3)}, R^{(4)}$ is nearly Gorenstein for every $g$.
\item[(2)] If $d=6$, then $R^{(6)}$ is nearly Gorenstein 
if and only if $g \not\equiv 2 \pmod{3}$. 
\item[(3)] If $d\ge 4g-1 = \deg K_C + \deg Z$, then $R^{(d)}$ is nearly Gorenstein. 
\item[(4)] In the following, we assume that $d\ge 5$. 
\begin{enumerate}
\item[{\rm (a)}] If $g=2$, then $R^{(d)}$ is nearly Gorenstein if and only if $d\ge 7$.
\item[{\rm (b)}]  If $g=3$, then $R^{(d)}$ is nearly Gorenstein if and only if $d=5, 6$ or 
$d\ge 11$.
\item[{\rm (c)}] 
 If $g=4$, then $R^{(d)}$ is nearly Gorenstein if and only if 
$d =5,7, 8$ or $d\ge 15$.
\item[{\rm (d)}]  If $g=5$, then $R^{(d)}$ is nearly Gorenstein if and only if 
$d \le 10, d\ne 6$ or $d\ge 19$.
\end{enumerate}
\end{enumerate}
\end{prop}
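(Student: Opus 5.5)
The plan is to reduce everything to a combinatorial statement about monomials. Since $K_C\sim(2g-2)P$ and $R=R(C,P)$, we have $R^{(d)}=R(C,dP)$. Using the description of $K_R$ and $\KiR$ in \ref{3.1} (here $\FRC(D)=0$), we get
\[
[K_{R^{(d)}}]_n = R_{nd+2g-2}, \qquad [K_{R^{(d)}}^{-1}]_n = R_{nd-2g+2},
\]
both viewed inside $R$. Also $R$ has a $k$-basis of monomials $x^ay^bz^e$ with $e\in\{0,1\}$ and degree $a+2b+(2g+1)e$. Consequently $\Tr(K_{R^{(d)}})=\sum_{n,m}R_{nd+2g-2}R_{md-2g+2}$ is a monomial ideal of $R^{(d)}$. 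A monomial $M$ of degree divisible by $d$ lies in it exactly when $M$ factors as $M_1M_2$ with $\deg M_1\ge 0$ and $\deg M_1\equiv 2g-2 \pmod d$; the complementary factor then automatically has degree $\equiv -(2g-2)$ and is $\ge 0$.

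Call such an $M$ \emph{good}. Then $R^{(d)}$ is nearly Gorenstein if and only if every monomial minimal generator of $\m_{R^{(d)}}$ is good. Since the trace is an ideal, it is enough to test the monomials of $R^{(d)}$ that are not products of two monomials of positive degree divisible by $d$. Put $c\equiv 2g-2 \pmod d$ with $0<c<d$; here $c\neq 0$ because $d\nmid 2g-2$.

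The first step is to list the set $S(M)$ of degrees of divisors of $M=x^ay^bz^e$. If $a\ge 1$, then $S(x^ay^b)$ is the full interval $[0,a+2b]$, and adding $z$ adds the shifted interval $[2g+1,\deg M]$. So monomials with $a\ge 1$ are good as soon as $\deg M\ge c$, which always holds since $\deg M\ge d>c$. The only possibly bad monomials are therefore $y^b$ and $zy^b$, and their divisor degrees are the even numbers, respectively the even numbers together with the odd numbers $\ge 2g+1$. For $y^b$ with $2b=nd$, goodness means there is an even $2j\le nd$ with $2j\equiv c \pmod d$. For $zy^b$ it means there is such a $2j$, or an odd $2g+1+2j\le nd$ in the class of $c$.

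Next, for each $d$, I would determine which of these pure $y$- and $zy$-monomials are minimal generators of $R^{(d)}$. For $y^b$ this means $d\mid 2b$ with $2b$ the least such multiple, i.e.\ $2b=\operatorname{lcm}(2,d)$. For $zy^b$, $2g+1+2b$ must be a multiple of $d$ and not decomposable, which forces it to be less than $2g+1+\operatorname{lcm}(2,d)$. Then I check the parity and residue conditions above.

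Applying this: for $d=3,4$ the residue conditions are satisfied trivially, giving (1). For $d=6$ the obstruction is $c$ odd together with a small $zy^b$, and this is exactly the congruence $g\equiv 2 \pmod 3$ in (2). For $d\ge 4g-1$ one uses Theorem~\ref{high-deg}, or the direct observation that every generator has room for a divisor of degree $c$ or $c+d$, which gives (3). The lists in (4) for $g=2,3,4,5$ come from running through the finitely many $d<4g-1$ with $d\nmid 2g-2$ and testing the two or three candidate bad generators.

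The main obstacle is the bookkeeping for the odd generator $zy^b$ when $d$ is even. There only even divisor degrees are available below $2g+1$, so the answer depends delicately on $g$ modulo small numbers, and this is where the exceptional values in (2) and (4) arise. I would handle it by writing $2g+1+2b=nd$ with $n$ minimal and comparing $c$ with $2g+1 \pmod d$ and with the bound $nd$ explicitly in each case.
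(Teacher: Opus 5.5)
Your reduction to monomials is the right framework. With $[K_{R^{(d)}}]_n=R_{nd+2g-2}$ and $[K_{R^{(d)}}^{-1}]_n=R_{nd-2g+2}$, a monomial of $R^{(d)}$ lies in the trace exactly when it has a monomial divisor of degree $\equiv 2g-2 \pmod d$; this is what the paper checks by hand. (The claim that the trace is spanned by monomials needs one more line, because $z\cdot z=-f(x^2,y)$ is not a monomial. It still holds: every term of $x^ay^bf(x^2,y)$ of positive $x$-degree factors suitably, and pure powers $y^B$ are always good.)

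\textbf{The gap is in your first step.} For $M=x^ay^bz$ with $a\ge 1$, the set of divisor degrees is $[0,a+2b]\cup[2g+1,\deg M]$. This set has a hole between $a+2b$ and $2g+1$, so $\deg M\ge c$ does not make $M$ good. For example, for $g=2$, $d=6$ one has $c=2$ and $S(xz)=\{0,1,5,6\}$, which misses the class of $2$ modulo $6$. These are exactly the obstructions the paper uses: $XZ\in R_{6m}$ when $2g+1=6m-1$ in (2), $ZX\in R_6$ in (4)(a), $XZ\in R_8$ in (4)(b), and $X^sZ$ with $d=2g+1+s$ in (4)(c), and similarly in (d).

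As a result, your argument cannot give (2) or the exclusions in (4). If $d$ is even, then $c$ is even (both $d$ and $2g-2$ are), so the case ``$c$ odd'' you invoke for $d=6$ never occurs. Moreover, $zy^b$ has odd degree, so it never lies in $R^{(d)}$. Since $y^b$ is always good, your test would declare $R^{(d)}$ nearly Gorenstein for every even $d$. That contradicts $d=6$ with $g\equiv 2\pmod 3$, $d=8,10$ for $g=3$, and $d=10,12,14$ for $g=4$.

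What is missing is the test monomial $x^rz$, where $0\le r<d$ is the residue of $-(2g+1)$ modulo $d$. Its divisor degrees reduce modulo $d$ to $[0,r]\cup[d-r,d-1]$. So it is bad exactly when $r<c<d-r$, which (since $c\equiv -r-3$) means exactly when $2r+3<d$. One checks that every other indecomposable monomial of $R^{(d)}$ is good whenever $x^rz$ is. This single criterion then yields (1), (2) and the lists in (4).

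Finally, for (3) do not appeal to Theorem~\ref{high-deg}. Its part (1) needs $H^1(C,\mathcal O_C((d-4g+4)P))=0$, which fails for $4g-1\le d\le 6g-6$, and its part (3) itself cites this proposition. Use your direct argument instead: $R_d=R_{2g-2}R_{d-2g+2}$, and $R^{(d)}$ is generated in degree $d$.
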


\begin{proof} 
Fix $d$ and put $L=K_R^{-1}$. 
\par
Let $K^{(d)}_n$ be the degree $n$ part of 
$K_{R^{(d)}}= [K_R]^{(d)}$, that is, 
\[
K^{(d)}_n=H^0(C, \cO_C(K_C + (nd)P)) = 
H^0(C, \cO_C((2g-2+ nd)P)). 
\]
Let  $L^{(d)}_n$  be the degree $n$ part of 
$K^{-1}_{R^{(d)}}$, that is, 
\[
L^{(d)}_n=H^0(C, \cO_C(-K_C + (nd)P)) = 
H^0(C, \cO_C((-2g+2+ nd)P)). 
\]

\par
(1) First suppose $d=3$. 
If $3 \not | \; 2g-2$, we can find $n$ such that $(2g-2) - 3n = 1$ or $2$, so that
$ K^{(3)}_{-n} \cdot L^{(3)}_{n+1} = R_3$ and similarly for $R_6$. 
\par
Note that if  $2g+1 = \deg Z$ is divisible by $3$, then so is $2g-2$ and hence 
$R^{(3)}$ is Gorenstein. Then it is easy to see that for every $m$, 
we can find $n$ so that $R_{3m} = K^{(3)}_n\cdot  L^{(3)}_{m-n}$. 
\par \vspace{1mm}
Next suppose $d=4$. If $g$ is odd, since $2g-2$ can be 
divided by $4$, then $R^{(d)}$ is Gorenstein. 
If $g=2m$ is even, then we can see $K^{(4)}_{-m+1}\cdot L^{(4)}_{m} = (R_2)^2 =R_4$.
Next, $m+1$ is the smallest degree $n$ such that $Z$ appears in 
$ R^{(4)}_{n} = R_{4n}$ and 
we can see that $K^{(4)}_{1}\cdot L^{(4)}_{m}= R^{(4)}_{m+1}$.  

\par \vspace{1mm}
(2) If $g \equiv 2 \pmod{3}$, then $2g + 1 = 6m -1$ for some positive integer $m$ 
and $XZ \in R^{(6)}_m = R_{6m}$.  But if $s= \pm (2g-2) + 6n < 6m$, then $R_s$ does not 
contain $Z$ and hence $XZ$ does not appear in $K^{(6)} \cdot  L^{(6)}$ and so $R^{(6)}$ is not nearly Gorenstein.  

\par \vspace{1mm}
(3) If $d \ge 4g-1 = \deg Z + \deg K_C$, then it is easy to see that 
$R_d = K^{(d)}_{0}\cdot L^{(d)}_{1}$. Since   $R^{(d)}$ is generated by $R_d$, 
$R^{(d)}$ is nearly Gorenstein.  
\par 
Note that if $g \equiv 1 \pmod{3}$ then $6 \,|\, 2g-2$.  
\par \vspace{1mm}
(4) (a) If $g =2$ and $d=5$, then $Z \in R_5$ is not contained in 
$K^{(5)} \cdot  L^{(5)}$
and $R^{(5)}$ is not nearly Gorenstein. 
Similarly, $R^{(6)}$ is not nearly Gorenstein because 
$ZX \in R_6 \setminus  K^{(6)} \cdot  L^{(6)}$. 
If $d\ge 7$,  then the assertion follows from (3). 

\par 
(b) If $g=3$, then $\deg K_C =4$ and $\deg Z =7$. If $d=5$, then 
$R_5 = K^{(5)}_0 \cdot  L^{(5)}_1$ and 
$R_{10} = K^{(5)}_1 \cdot  L^{(5)}_1 + 
K^{(5)}_0 \cdot  L^{(5)}_2$ imply that $R^{(5)}$ is nearly Gorenstein. 
We can show that $R^{(7)}$ (resp. $R^{(8)}$) is not nearly Gorenstein, 
since $Z \in R_7$ (resp. $XZ \in R_8$) is not contained in 
$K^{(8)} \cdot  L^{(8)}$. 
(resp. $K^{(8)} \cdot  L^{(8)}$). $R^{(9)}, R^{(10)}$ is not nearly Gorenstein in the same manner. 
\par  
(c) If $g=4$, then  $R$ is generated by $X,Y,Z$ with degrees $1,2, 9$ and $\deg (K_C) = 6$.  
We consider the cases $d = 5,7,8, \ldots$.
\par
If $d = 4,5,7,8$, we can show that $R^{(d)}$ is nearly Gorenstein. 
If $9\le d \le 14$, we put  
$d = 9+s$. 
Then we can easily see that $X^s Z\in R_d$ is not in  $K^{(d)} \cdot  L^{(d)}$.
Hence  $R^{(d)}$ is not nearly Gorenstein. If $d \ge 15$, then $R^{(d)}$ is nearly Gorenstein
by (3). 
\par 
(d) If $g=5$, then then  $R$ is generated by $X,Y,Z$ with degrees $1,2, 11$ and $\deg (K_C) = 8$.
We can have the result by the same argument as in (a), (b), (c). 
\end{proof}

\begin{prop}\label{degD|2g-2} 
Assume that $\deg (D)$ divides $2g-2 = \deg (K_C)$ and that\par
 $H^0(C, \cO_C(D)) \ne 0$. 
Then $R(C,D)$ is either Gorenstein or not nearly Gorenstein.  
\end{prop}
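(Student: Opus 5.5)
The plan is to reduce the statement directly to Theorem \ref{degD-div}. We are in the cone situation: $C$ is a smooth curve of genus $g\ge 2$ and $D$ is an integral ample divisor, so $\FRC(D)=0$. Put $e=\deg D>0$. Since $e$ divides $2g-2$, we can write $2g-2=re$ with $r$ a positive integer; positivity uses $g\ge 2$. Hence
\[
\deg(K_C+\FRC(D))=\deg K_C=2g-2=r\deg D,
\]
which is exactly the numerical hypothesis of Theorem \ref{degD-div}. The other hypothesis there is $R_1\ne 0$, and this is precisely our assumption $H^0(C,\cO_C(D))\ne 0$, because $R_1=H^0(C,\cO_C(D))T$.

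Next I will check that the usual degree on $C$ (with $\deg P=1$ for every point, using that $k$ is algebraically closed) satisfies conditions (1) and (2) of Definition \ref{degD}. Condition (1) holds because a nonzero section of $\cO_C(E)$ gives an effective divisor linearly equivalent to $E$. Condition (2) holds because an effective divisor of degree $0$ is $0$, so $E\sim 0$ and $\cO_C(E)\cong\cO_C$. Theorem \ref{degD-div} then says: if $K_R$ is not a free $R$-module, then $R_1\not\subset\Tr_R(K_R)$, so $R$ is not nearly Gorenstein.

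It remains to see that when $K_R$ is free, $R$ is Gorenstein. Since $\dim R=2$ and $R$ is normal, $R$ is Cohen--Macaulay by Serre's criterion. A graded Cohen--Macaulay ring whose canonical module is free of rank one is Gorenstein. So exactly one of two things happens: $K_R$ is free and $R$ is Gorenstein, or $K_R$ is not free and $R$ is not nearly Gorenstein. This is the claimed dichotomy.

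I do not expect a real obstacle. The only point needing care is the step inside the proof of Theorem \ref{degD-div} showing $a(R)<r$ when $K_R$ is not free. I will make it explicit here. If $a(R)\ge r$, then $H^0(C,\cO_C(K_C-a(R)D))\ne 0$ forces $\deg(K_C-a(R)D)=(r-a(R))e\ge 0$, so $a(R)=r$ and $K_C-rD$ is effective of degree $0$. Then $K_C\sim rD$, which gives $K_R\cong R(r)$, free. The symmetric argument applied to $\KiR$ gives $b(R)>r$. Together these give $-a(R)+b(R)\ge 2>1=m(R)$, which contradicts Proposition \ref{a(R)b(R)}(2) if $R$ were nearly Gorenstein.
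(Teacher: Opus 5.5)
Your proposal is correct and follows the paper's route: the paper's proof is just the remark that this is a special case of Theorem \ref{degD-div}, with $\FRC(D)=0$ and $r=(2g-2)/\deg D$. You have carried out that reduction explicitly, verifying the degree axioms, the hypothesis $R_1\ne 0$, the bounds $a(R)<r<b(R)$ when $K_R$ is not free, and the fact that a free $K_R$ makes the Cohen--Macaulay ring $R$ Gorenstein.
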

\begin{proof} 
This is a special case of Theorem \ref{degD-div}. 
\end{proof}

We use this Proposition for $R(C,D)$, for some special values of $\deg(D)$.

\begin{prop}\label{d=g-1} We assume $g\ge 2$, $R=R(C,D)$ and assume that 
$R$ is not Gorenstein. Then
\begin{enumerate}
\item If $\deg(D)= g-1$, then $R$ is not nearly Gorenstein. 
\item If $g\ge 3$, $g\le \deg(D) < 2g-2$ and if $R$ is nearly Gorenstein, then we have 
$a(R)=1, b(R)=2$ and $R_1 = [K_R]_{-1}\cdot [\KiR]_2$.   
\item If $\deg(D) = 2g-3 = \deg (K_C) -1$ and if $R$ is nearly Gorenstein, then 
$C$ is a hyperelliptic curve and $D \sim K_C - P \sim (2g-3)P$, 
where $R$ is a hyperelliptic point of $C$. If this is the case, then $R$ is nearly 
Goorenstin. 
\item If $\deg D = 2g-1$ and if $R(C,D)$ is nearly Gorenstein, then 
$D\sim K_C +P$. Conversely, if $D = K_C + P$ for some point $P\in C$, 
then $R(C,D)$ is nearly Gorenstein. 
\end{enumerate}
\end{prop}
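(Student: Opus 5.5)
The plan is to work throughout with the description of \ref{3.1}. Since $D$ is integral, $\FRC(D)=0$, so
\[
[K_R]_n=H^0(C,\cO_C(K_C+nD)), \qquad [\KiR]_n=H^0(C,\cO_C(-K_C+nD)).
\]
In each case I would pin down $a(R)$, $b(R)$ and $m(R)$ using Theorem \ref{RRTh}, and then apply Proposition \ref{a(R)b(R)}(2). That proposition says nearly Gorenstein forces $-a(R)+b(R)=m(R)$ and $[K_R]_{-a(R)}[\KiR]_{b(R)}=R_{m(R)}$. For the converse statements in (3) and (4) I would check $R_n\subset \Tr_R(K_R)$ degree by degree, using the surjectivity argument from the proof of Theorem \ref{high-deg}.

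(1) If $H^0(D)\neq 0$, then $\deg D=g-1$ divides $2g-2$, so Proposition \ref{degD|2g-2} applies and we are done. If $H^0(D)=0$, Riemann--Roch gives $h^1(D)=0$, hence $[K_R]_{-1}=H^0(K_C-D)=0$. Next, $[K_R]_{-2}=H^0(K_C-2D)$ has degree $0$, so it is nonzero only when $2D\sim K_C$, which would make $R$ Gorenstein. Since $[K_R]_0=H^0(K_C)\ne 0$, this gives $a(R)=0$. On the other side, $[\KiR]_n$ has degree $(n-2)(g-1)$, so it vanishes for $n\le 1$, and also for $n=2$ unless $2D\sim K_C$; thus $b(R)\ge 3$. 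Finally $h^0(2D)\ge g-1\ge 1$ by Theorem \ref{RRTh}(3), so $m(R)=2$. Then $-a(R)+b(R)\ge 3>m(R)$, which contradicts Proposition \ref{a(R)b(R)}(2).

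(2) Riemann--Roch gives $h^0(D)\ge \deg D+1-g\ge 1$, so $m(R)=1$. The degree $\deg(K_C-2D)$ is negative, so $[K_R]_{-2}=0$ and $a(R)\le 1$. Similarly $[\KiR]_1=H^0(D-K_C)$ has negative degree, so $b(R)\ge 2$. Hence $-a(R)+b(R)\ge 1$, and the equality $=m(R)=1$ forces $a(R)=1$ and $b(R)=2$; the product condition is exactly Proposition \ref{a(R)b(R)}(2).

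(4) Here $[K_R]_{-1}$ has degree $-1$, so $a(R)=0$ and $m(R)=1$. Since $\deg(D-K_C)=1$, $[\KiR]_1\ne 0$ if and only if $D-K_C\sim P$ for some point $P$. So nearly Gorenstein forces $b(R)=1$, hence $D\sim K_C+P$. For the converse:
\begin{itemize}
\item In degree $1$, $h^0(K_C+P)=g=h^0(K_C)$, so $P$ is a base point and $R_1=H^0(K_C)\cdot s_P=[K_R]_0[\KiR]_1$.
\item For $n\ge 2$, $\deg nD\ge 4g-2>6g-6$ fails in general, but $\deg D=2g-1>3g-3$ is not needed. It suffices that $H^1(nD-2K_C)=0$, which holds because $\deg(nD-2K_C)\ge 2g>2g-2$. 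The exact sequence in the proof of Theorem \ref{high-deg} then gives $R_n=[K_R]_0[\KiR]_n$.
\end{itemize}
Note that $R$ need not be standard graded here, which is why I argue degree by degree rather than invoking Proposition \ref{a(R)b(R)}(3).

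(3) Now $\deg(K_C-D)=1$, so $[K_R]_{-1}\ne 0$ if and only if $K_C-D\sim P$ for some point $P$, while $[\KiR]_1$ has degree $-1$ and vanishes, so $b(R)\ge 2$. For $g\ge 3$ we have $h^0(D)\ge g-2\ge 1$, so $m(R)=1$. If $K_C-D$ is not effective, then $a(R)\le 0$ and $-a(R)+b(R)\ge 2>m(R)$, so $R$ is not nearly Gorenstein. So nearly Gorenstein forces $D\sim K_C-P$, $a(R)=1$, $b(R)=2$, and
\[
R_1=H^0(K_C-P)=s_P\cdot H^0(K_C-2P).
\]
This requires $h^0(K_C-2P)=h^0(K_C-P)=g-1$, i.e.\ $h^0(2P)=2$, so $P$ is a hyperelliptic point and $D\sim(2g-3)P$. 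The case $g=2$ (where $\deg D=1$ and possibly $m(R)>1$) needs a separate small check of the same kind.

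For the converse in (3), $R=R(C,P)^{(2g-3)}$ with $R(C,P)=k[X,Y,Z]$ as in Proposition \ref{hypell}. I must show that every $H^0(n(2g-3)P)$ lies in $\sum_j H^0((2g-2-j(2g-3))P)\cdot H^0((j(2g-3)-2g+2)P)$ for suitable $j$. In large degrees, namely when $\deg(nD-2K_C)>2g-2$, the Theorem \ref{high-deg} argument works. The small $n$ require checking monomials $X^aY^bZ^c$ by hand, as in the proof of Proposition \ref{hypell}. The delicate point is the monomials containing $Z$ in low degree: one must show they factor into a product of admissible degrees on the $K_R$ side and the $\KiR$ side.

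I expect this converse in (3) to be the main obstacle, and I would attack it first by this monomial bookkeeping.
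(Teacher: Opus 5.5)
Your arguments for (1), (2) and for the "only if" halves of (3) and (4) are correct and follow the paper's route via Proposition \ref{a(R)b(R)}(2) and Riemann--Roch. In (3) you make explicit the step $h^0(K_C-2P)=h^0(K_C-P)\Rightarrow h^0(2P)=2$, which the paper leaves implicit. The case $g=2$ of (3) needs no separate check: there $\deg D=1=g-1$, so (1) applies.

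\textbf{The converse of (4) breaks in degree $n=2$.} You assert $\deg(nD-2K_C)\ge 2g$ for $n\ge 2$, but $\deg(2D-2K_C)=2(2g-1)-2(2g-2)=2$. In fact $2D-2K_C\sim 2P$, and $h^1(2P)=h^0(K_C-2P)\ge g-2$. This is positive for $g\ge 3$, and also for $g=2$ when $P$ is a Weierstrass point. So the two-section argument from the proof of Theorem \ref{high-deg} does not give surjectivity in degree $2$; it only works for $n\ge 3$, where the degree is $\ge 2g+1$.

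Worse, your intermediate claim $R_2=[K_R]_0[\KiR]_2$ is false in general. Take $C$ hyperelliptic and $P$ a Weierstrass point. Then $R=S^{(2g-1)}$ with $S$ as in Proposition \ref{hypell}. Here $[K_R]_0[\KiR]_2=S_{2g-2}S_{2g}\subset k[X,Y]$ misses $Z\cdot k[X,Y]_{2g-3}\subset S_{4g-2}=R_2$. You cannot skip degree $2$ either. Since $h^0(K_C+P)=h^0(K_C)$, $P$ is a base point of $|D|$, so $R_1^2\subset H^0(2K_C)$ is a proper subspace of $R_2$.

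The missing idea (the paper's) is to use the other summand. $[K_R]_1[\KiR]_1=H^0(2K_C+P)$ is the hyperplane of $R_2=H^0(2K_C+2P)$ consisting of sections vanishing at $P$. Now take $\omega\in H^0(K_C)$ not vanishing at $P$ and $\phi\in H^0(K_C+2P)\setminus H^0(K_C+P)$. The product $\omega\phi$ lies in $[K_R]_0[\KiR]_2$ but not in that hyperplane, hence $R_2\subset \Tr_R(K_R)$.

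\textbf{The converse of (3) is not proved; you only describe a bookkeeping plan.} The paper handles $g\le 5$ by Proposition \ref{hypell} with $d=2g-3$. For $g\ge 6$ it checks that the $Z$-part $ZS_{2g-7}$ of $R_2$ lies in $[K_R]_{-1}[\KiR]_3$; this holds because $2g-7$ is odd, so every element of $S_{2g-7}$ is divisible by $X$.

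A uniform way to finish along your lines, with $R=S^{(2g-3)}$:
\begin{enumerate}
\item We have $[K_R]_{-1}=kX$ and $[\KiR]_{m+1}=S_{m(2g-3)-1}$. Hence every monomial of $R_m=S_{m(2g-3)}$ divisible by $X$ lies in $[K_R]_{-1}[\KiR]_{m+1}$.
\item The only other monomials are $Y^b$ and $ZY^b$. These lie in $[K_R]_0[\KiR]_m=S_{2g-2}\,S_{m(2g-3)-2g+2}$ by splitting off the factor $Y^{g-1}\in S_{2g-2}$.
\item The single exception is $ZY\in R_3$ when $g=3$, and $ZY=Z\cdot Y\in[K_R]_1[\KiR]_2$.
\end{enumerate}
Until some such verification is written out, the "if" part of (3) remains unproved in your proposal.
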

\begin{proof} (1) Assume on the contrary, $R$ is nearly Gorenstein. 
Then by Proposition \ref{degD|2g-2}, we must have $h^0(D) =0$ and $m(R)=2$.
But then by Riemann-Roch Theorem since $h^0(D) - h^1(D)=0$ and 
$h^1(D)=h^0(K_C-D)$, we have $h^0(K_C -D)=0$, which implies $a(R)=0$. 
Since $h^0(-K_C+2D) =0$, we have $b(R)=3$ and $b(R) -a(R)=3 > m(R)$. 
Hence $R$ is not nearly Gorenstein.\par  
(2) This follows from Proposition \ref{a(R)b(R)} since $h^0(D)>0$.  
\par
(3) We assume $g\ge 3$ and  $2g-3 > g-1$, since if $g=2$ we can reduce to the case (1). 
Since $\deg(K_C-D)=1$ and $a(R)=1$, we must have $h^0(K_C-D))=\dim_k ([K_R]_{-1})=1$ and  
by Riemann-Roch formula, $h^0(D) - h^0(K_C -D) = \deg(D) - (g-1) = g-2$, we have   
$\dim_k R_1 = g-1$. Also, we must have $D=K_C -P$ for some point $P\in C$. 
By (2), we must have $\dim_k([\KiR]_2) = g-1$. \par
If $C$ is hyperelliptic, $D\sim K_C-P$ for a hyperelliptic point $P$, 
then since $K_C \sim (2g-2)P$, we have $D\sim (2g-3)P$. 
 Hence the nearly Gorenstein property of $R(C,D)$ is reduced to Proposition  
\ref{hypell}, $d=2g-3$. The case $g\le 5$, $R$ is shown to be nearly Gorenstein there.
If $g\ge 6$, putting $S = k[X,Y,Z]/(Z^2 - f_{2g+1}(X^2,Y))$, we can check 
 $Z S_{2g-7}\subset R_2$ is included in $[K_R]_{-1}\dot [\KiR]_3$ and $R$ is nearly 
 Gorenstein.  \par 
(4) If $\deg D = 2g-1$ and if $R(C,D)$ is nearly Gorenstein, then we must have 
$R_1= K_0\cdot L_1$ and $L_1 = H^0(D-K_C)\ne 0$. Since $\deg (D-K_C)=1$, we must have 
$D-K_C \sim P$ for some point $P\in C$. \par
Conversely, assume that $D=K_C +P$. Then note that by Riemann-Roch Theorem, 
 $h^0(D) - h^1(D) = g$ and $h^1(D)=0$, we have $\dim R_1= h^0(D)=g$. Since $\dim L_1=1$, 
 we have $R_1= K_0\cdot L_1$.  Also, for $n\ge 3$, we have $R_n = K_0\cdot L_n$ by 
 Theorem \ref{high-deg} (1), we have $R_n = K_0\cdot L_n$. To show that $R$ is nearly 
 Gorenstein, it suffices to show that we have to show that $R_2 = K_1\cdot L_1 + K_0\cdot L_2$. 
Now, $L_1\cdot K_1=H^0(2D-P)$ is a subspace of $R_2$ of codimension $1$. Hence we can 
assert $L_1\cdot K_1 + L_2\cdot K_0 =R_2$ if $L_2\cdot K_0$ is not included in 
$H^0(2D -P)$.  Now take $\phi \in H^0(C, \cO_C(D+P))$ so that $H^0(C, \cO_C(D+P))$ 
is generated by $H^0(C, \cO_C(D))$ and $\phi$. Then we can assert that 
$\phi\cdot H^0(C, \cO_C(K_C))$ is not included in $H^0(2D-P)$ and then we can conclude 
that $L_1\cdot K_1=H^0(2D-P)$. 
\end{proof}

\par
We classify the nearly Gorenstein cone singularities $R(C,D)$ when genus $g(C)$ of $C$ is $2$ and $3$. 

\begin{thm}\label{g=2} 
Let $C$ be a smooth curve of genus $g=2$ and $D$ be an integral  ample divisor on $C$.
We always assume that $R=R(C,D)$ is not Gorenstein. 
\begin{enumerate}
\item If $\deg D=1$ or $2$, then $R$ is not nearly Gorenstein. 
\item If $\deg D =3$, $R$ is  nearly Gorenstein
 if and only if $D \sim K_C + P$ for some point $P\in C$.
\item If $\deg D=4$, $R$ is nearly Gorenstein if and only 
if $D \sim 2K_C$. 
\item If $\deg D = 5$, then $R$ is  nearly Gorenstein if and only if \par\noindent
$H^0(C, \cO_C(3K_C -D))=0$. 
\item If $\deg D = 6$, then $R$ is nearly Gorenstein 
if and only if $D \not \sim 3K_C$. 
\item If $\deg D \ge 7$, then $R$ is nearly Gorenstein. 
\end{enumerate}
\end{thm}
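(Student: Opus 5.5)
We treat the six cases by degree, reducing each to results already stated: Proposition \ref{degD|2g-2}, Proposition \ref{d=g-1}, Theorem \ref{high-deg}, Corollary \ref{ge 2g+1} and Proposition \ref{hypell}. Throughout, $g=2$, so $\deg K_C=2$ and every genus $2$ curve is hyperelliptic.

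\textbf{Case $\deg D=1$ or $2$.}
\begin{itemize}
\item If $\deg D=1=g-1$, Proposition \ref{d=g-1}(1) applies directly.
\item If $\deg D=2$ and $h^0(D)\ne 0$, then $\deg D$ divides $2g-2$, so Proposition \ref{degD|2g-2} applies. Since $R$ is assumed not Gorenstein, it is not nearly Gorenstein.
\item If $\deg D=2$ and $h^0(D)=0$, then $m(R)=2$ and $D\not\sim K_C$. Riemann--Roch gives $h^0(K_C-D)=0$, so $a(R)\le -1$. We check directly that $[K_R]_{-1}=H^0(K_C-D)=0$ and $[K_R]_0=H^0(K_C)\ne0$, so $a(R)=0$. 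Next, $[\KiR]_1=H^0(D-K_C)=0$, since $D-K_C$ is a nontrivial degree $0$ divisor, so $b(R)\ge 2$. Hence $-a(R)+b(R)\ge 2=m(R)$, and the analysis needs more care here. I would show $b(R)=2$ and test whether $[K_R]_0[\KiR]_2=H^0(K_C)H^0(2D-K_C)$ fills $R_2=H^0(2D)$. By base-point-free pencil arguments this product is surjective when $2D-K_C$ is base point free, so it may be necessary to also handle $R_3$ or another generator degree. I expect this subcase to be the main obstacle: one needs an explicit element of some $R_n$ missing from the trace, for instance in $R_3=H^0(3D)$ via degree counting.
\end{itemize}

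\textbf{Case $\deg D=3=2g-1$.} This is exactly Proposition \ref{d=g-1}(4).

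\textbf{Case $\deg D=4$.} Proposition \ref{degD|2g-2} does not apply, since $4\nmid 2$. Here $\deg D=4=4g-4$. By Theorem \ref{high-deg}(1) and its proof, the criterion is whether $H^1(D-2K_C)=H^0(3K_C-D)^*$ vanishes. The divisor $3K_C-D$ has degree $2$, so $H^0(3K_C-D)\ne0$ forces $3K_C-D$ to be effective of degree $2$.
\begin{itemize}
\item If $D\sim 2K_C$, then $D\sim 4P$ for a Weierstrass point $P$, so $R=R(C,P)^{(4)}$ and Proposition \ref{hypell}(1) gives nearly Gorenstein.
\item If $D\not\sim 2K_C$, then $a(R)=-1$, since $h^0(K_C-D)=0$ and $h^0(K_C)\ne0$ would give $a\le0$. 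More precisely, $[K_R]_{-1}=H^0(K_C-D)=0$, so $a(R)=0$, and $b(R)=1$ because $\deg(D-K_C)=2$ and $h^0(D-K_C)\ge1$. In the generic subcase, I would show that $H^0(K_C)H^0(D-K_C)\subsetneq H^0(D)$ by dimension count: $h^0(D)=3$, while the product has dimension at most $2\cdot h^0(D-K_C)-\dots$. When $D-K_C\not\sim K_C$ we have $h^0(D-K_C)=1$, so the product is $2$-dimensional, smaller than $3$. Proposition \ref{a(R)b(R)}(2) then shows $R$ is not nearly Gorenstein.
\end{itemize}

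\textbf{Case $\deg D=5\ge 2g+1$.} $R$ is standard graded, so Corollary \ref{ge 2g+1} applies. Take the exact sequence $0\to\cO(D-2K_C)\to\cO(D-K_C)^2\to\cO(D)\to0$ from the proof of Theorem \ref{high-deg}. Since $H^1(D-K_C)=0$, surjectivity holds exactly when $H^1(D-2K_C)=0$, i.e. when $H^0(3K_C-D)=0$.

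\textbf{Case $\deg D=6=6g-6$.} This is Theorem \ref{high-deg}(3).

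\textbf{Case $\deg D\ge7$.} This is Theorem \ref{high-deg}(2).
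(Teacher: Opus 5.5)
Your case-by-case reduction is the same as the paper's, and the cases $\deg D=1$ and $\deg D\ge 3$ are handled correctly. For $\deg D=4$ you even supply an argument that the paper's proof does not spell out. There $a(R)=0$, $b(R)=1$, and $\dim_k [K_R]_0[\KiR]_1\le 2\cdot 1<3=\dim_k R_1$ when $D\not\sim 2K_C$. When $D\sim 2K_C$, you have $R(C,2K_C)\cong R(C,P)^{(4)}$ for a Weierstrass point $P$, which Proposition~\ref{hypell}(1) covers.

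The hole is in (1). You leave the subcase $\deg D=2$, $h^0(D)=0$ open and call it the main obstacle, but this subcase cannot occur. Riemann--Roch with $g=2$, $\deg D=2$ gives $h^0(D)-h^0(K_C-D)=\deg D+1-g=1$, so $h^0(D)=1+h^0(K_C-D)\ge 1$. Your assertion that Riemann--Roch yields $h^0(K_C-D)=0$ is a miscalculation; it would force $h^0(K_C-D)=-1$. Hence every degree-$2$ divisor has $h^0(D)\neq 0$, and Proposition~\ref{degD|2g-2} applies in all cases. A non-Gorenstein $R$ is therefore not nearly Gorenstein, which is exactly the paper's one-line argument. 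No search for an element of $R_3$ outside the trace is needed.

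Some minor points in the $\deg D=4$ case:
\begin{enumerate}
\item Theorem~\ref{high-deg}(1) is only a sufficient condition, not a criterion, and here it never applies. Since $\deg(3K_C-D)=2=2g-2$, you always have $h^0(3K_C-D)\ge g-1=1$. Your conclusion rests on the dimension count, not on this remark, so drop the remark.
\item Drop also the momentary claim $a(R)=-1$, which you immediately correct to $a(R)=0$.
\item The equality $h^0(D-K_C)=1$ holds for every $D\not\sim 2K_C$, not just ``generically''. Indeed $h^0(D-K_C)=1+h^0(2K_C-D)$, and $2K_C-D$ is a nontrivial class of degree $0$.
\end{enumerate}
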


\begin{proof} (1)
If $\deg D = 1 =g-1$, $R$ is
  not nearly Gorenstein by Proposition \ref{d=g-1}. If $\deg D =2$, since $h^0(D)\ge 1$, 
 if $R$ is nearly Gorenstein, then $R$ is Gorenstein by Proposition \ref{degD|2g-2}. 
\medskip
\par \vspace{1mm}
Now suppose that $\deg D \ge 3$. Then $R$ is not Gorenstein since $\deg D > \deg K_C$.
If $R$ is nearly Gorenstein, then we must have $m(R)=1$, $a(R)=0$ and $b(R)=1$. 
Then we must have $[\KiR]_1= H^0(C, \cO_C(D-K_C))\ne 0$. 
\par
(2) This follows from Proposition \ref{d=g-1} (4).

\par 
\vspace{1mm}
(4)  Let $\deg D =5$. Then we have $\dim R_1 =4$. 
 Note that $H^1(C,\cO_C(D-K_C))=0$.
 Again, we consider the exact sequence as in the proof of Theorem 
\ref{high-deg};  
\[ 0 \to \cO_C(D-2K_C)) \to \cO_C( D- K_C)^{\oplus 2} \to \cO_C(D ) \to 0.\]  
This implies that $[K_R]_0\cdot [\KiR]_1 =R_1$ holds if and only if $H^1(C, \cO_C(D - 2K_C))=0$, 
which is equivalent to say that $H^0( \cO_C(3K_C -D))= 0$.

\par \vspace{1mm}
(5) and (6) are proved in Theorem \ref{high-deg}.
\end{proof}
Next we treat the case $g=3$.

\begin{thm}\label{g=3} 
Let $C$ be a smooth curve of genus $g=3$ and 
$D$ be an  integral ample divisor on $C$. 
We always assume that $R=R(C,D)$ is not Gorenstein.
\begin{enumerate}
\item  If $\deg D=1$ or $2$, then $R$ is not nearly Gorenstein.
\item If $\deg D =3$, $R$ is nearly Gorenstein if and only if $C$ is hyperelliptic and 
$D\sim 3P$ for some hyperelliptic point $P\in C$. 
\item If $\deg D = 4=\deg K_C$,  
then $R$ is not nearly  Gorenstein.  
\item If $\deg D =5$, then $R$ is nearly Gorenstein if and only if 
$D \sim K_C + P$ for some point $P\in C$. 
\item If $\deg D =6$, then $R$ is nearly  Gorenstein if and only if 
$C$ is hyperelliptic and $D\sim 6P$ for a hyperelliptic point $P\in C$
 (the case of $g=3$ of Proposition \ref{hypell} (2)). 
\item If $\deg D = 7$, $R$ is nearly Gorenstein if and only if 
$D = K_C + B$, 
where $B$ is a divisor on $C$ with $\deg B = 3$,  
$h^0(B)=2$ and $\cO_C(B)$ has no fixed point.  
\item If $\deg D = 8$, $R$ is nearly Gorenstein if and only if 
$D \sim  2 K_C$ and $C$ is not hyperelliptic. 
\item If $9 \le \deg D \le 11$, then $R$ is nearly Gorenstein if and only if 
\[
R_1 = H^0(C,\cO_C(D))= H^0(C, \cO_C(K_C))\cdot H^0(C, \cO_C(-K_C + D)).
\] 
The explicit condition for this equality holds is not known to us yet.  
\item If $\deg D \ge 12$, then $R$ is nearly Gorenstein. 
\end{enumerate}
\end{thm}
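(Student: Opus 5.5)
The plan is to treat the nine cases separately. Most of them reduce directly to results already proved. The only real work is in (5), (6), (7), where Corollary \ref{ge 2g+1} (or a direct degree-$1$ analysis) reduces everything to surjectivity of a multiplication map $H^0(K_C)\otimes H^0(B)\to H^0(K_C+B)$; I will decide that with the base-point-free pencil trick.

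\emph{Reductions to earlier results.} For (1), $\deg D=2=g-1$ is Proposition \ref{d=g-1} (1). For $\deg D=1$: if $h^0(D)\ne 0$, Proposition \ref{degD|2g-2} applies since $1\mid 4$. If $h^0(D)=0$, then $a(R)\le 3$, since $K_C\sim 4D$ would make $R$ Gorenstein, and $b(R)\ge 5$. So $-a(R)+b(R)\ge 2$, and by Proposition \ref{a(R)b(R)} (2) I only need to exclude $m(R)=2$ with $a(R)=3$, $b(R)=5$. That is a dimension count on $[K_R]_{-3}\cdot[\KiR]_5$, a product of spaces of dimension at most $1$, against $R_2$.

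The remaining reductions are direct. Case (3) is Proposition \ref{degD|2g-2}, since $4\mid 2g-2$ and $h^0(D)\ge 2$ by Riemann--Roch. Case (2) is Proposition \ref{d=g-1} (3) with $2g-3=3$. Case (4) is Proposition \ref{d=g-1} (4) with $2g-1=5$. Case (8) is Corollary \ref{ge 2g+1}, since $9\ge 2g+1$. Case (9) is Theorem \ref{high-deg} (2), (3), since $6g-6=12$ and the exception there occurs only for $g=2$.

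\emph{Case (5), $\deg D=6$.} Here $h^0(K_C-D)=0$, so $a(R)=0$, and $m(R)=1$. By Proposition \ref{a(R)b(R)} (2), near Gorensteinness forces $b(R)=1$ and $[K_R]_0\cdot[\KiR]_1=R_1$, where $\dim R_1=4$. Put $B=D-K_C$, a divisor of degree $2$ that must be effective. If $h^0(B)=1$, the product has dimension at most $3<4$. So $h^0(B)=2$, which means $C$ is hyperelliptic and $B$ is the $g^1_2$. Then $D\sim 3g^1_2\sim 6P$ for any hyperelliptic point $P$, so $R\cong R(C,P)^{(6)}$. Proposition \ref{hypell} (2) with $g=3\not\equiv 2\pmod 3$ then gives both directions of the equivalence.

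\emph{Cases (6), (7).} By Corollary \ref{ge 2g+1}, in both cases near Gorensteinness is equivalent to surjectivity of $\mu:H^0(K_C)\otimes H^0(B)\to H^0(D)$ with $B=D-K_C$.

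For (6), $\deg B=3$ and $h^0(D)=5$. If $h^0(B)=1$, the image has dimension $3$. If $h^0(B)=2$ with a base point $Q$, the image lies in the $4$-dimensional space $H^0(D-Q)$. If $|B|$ is a base-point-free pencil, the pencil trick identifies $\ker\mu\cong H^0(K_C-B)$, which has dimension $h^1(B)=1$. Hence $\dim\operatorname{im}\mu=6-1=5$ and $\mu$ is surjective, which gives the stated criterion.

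For (7), $\deg B=4$ and $h^0(D)=6$. If $D\sim 2K_C$, surjectivity of $\mu$ is projective normality of the canonical curve, which holds exactly when $C$ is not hyperelliptic (Noether). When $D\not\sim 2K_C$, I would attempt to show $\mu$ is not surjective by the same pencil-trick count. If $|B|$ is a base-point-free pencil, though, that count gives $\dim\operatorname{im}\mu=6-h^0(2K_C-D)=6$, so this case needs a more careful analysis. I would first check whether the failure must come from a base point of $|B|$, or from $[K_R]_{-a(R)}\cdot[\KiR]_{b(R)}$ in higher degrees. I expect this to be the main obstacle, and it is the step where the statement should be double-checked.
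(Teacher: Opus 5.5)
Your handling of $\deg D=2$ and of (2)--(6), (8), (9) follows the paper: same citations, same degree-one counts, and your pencil trick in (6) is the paper's exact sequence. The two places you left open are the $\deg D=1$ half of (1) and the ``only if'' half of (7). Neither can be closed, because the assertion is false in both places, and the paper's proof contains a matching error in each.

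\emph{Case (1), $\deg D=1$, $h^0(D)=0$.} Write $K_n=[K_R]_n$, $L_n=[\KiR]_n$.

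Your reduction omits $m(R)=3$. If $R_2=0$, then $a(R)=3$, $b(R)=6$ is allowed by Proposition~\ref{a(R)b(R)}(2). The paper's case (a) excludes this using $R_3=K_{-3}L_6$ and $R_4=K_{-3}L_7$, so this gap is fixable.

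The real problem is $m(R)=2$. Your dimension count only excludes $\dim R_2=2$. If $K_C-3D\sim P$, $5D-K_C\sim Q$ and $h^0(P+Q)=1$, then $K_{-3}L_5=R_2$ holds automatically, and this configuration really does give nearly Gorenstein rings. Here is an example:
\begin{itemize}
\item Take a smooth plane quartic $C$ and a smooth conic $\Gamma$ with $\Gamma\cdot C=5P+3Q$, $P\ne Q$. Such $C$ exist over $\mathbb{C}$: quartics restrict onto all degree-$8$ forms on $\Gamma\cong\PP^1$, and a general member of the resulting linear system is smooth.
\item Put $D=K_C-2P-Q$. Then $2D\sim P+Q$, $3D\sim K_C-P$ and $5D\sim K_C+Q$.
\item $h^0(D)=0$ because $Q\notin T_PC=T_P\Gamma$, and $4D\sim 2P+2Q\not\sim K_C$, so $R$ is not Gorenstein.
\item Let $E=T_PC\cdot C-2P$ and $F=\overline{PQ}\cdot C-P-Q$. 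Then $F$ avoids $P$ and $Q$, since $C$ is tangent to $\Gamma$ at both points.
\item Degree $2$: $R_2=K_{-3}L_5=H^0(P)\,H^0(Q)$.
\item Degree $3$: $R_3=H^0(K_C-P)$ is spanned by $K_{-3}L_6$ and $K_{-2}L_5$, which correspond to the distinct lines $T_PC$ and $\overline{PQ}$ through $P$.
\item Degree $4$: $R_4=H^0(2P+2Q)$ is spanned by $K_{-1}L_5$ (divisor $2P+2Q$) and $K_{-2}L_6$ (divisor $E+F$).
\item Degree $n\ge5$: $K_{n-5}L_5$ is the space of sections of $nD$ vanishing at $Q$. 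This is all of $R_5$, and for $n\ge6$ the summand $K_{n-6}L_6$ contains a section not vanishing at $Q$.
\end{itemize}
Hence $\Tr_R(K_R)=\m_R$. The paper's case (b) breaks at ``$\dim L_6=1$ implies $h^0(2P)=2$''. Riemann--Roch gives $h^0(K_C-2P)=h^0(2P)$, so $\dim L_6=1$ is the generic situation and yields no contradiction.

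\emph{Case (7).} Your pencil-trick count is correct, and you should trust it. Because $\deg D\ge 2g+1$, Proposition~\ref{a(R)b(R)}(3) (with $R=k[R_1]$, $a(R)=0$, $b(R)=1$) decides everything in degree $1$. There is no higher-degree obstruction to look for.
\begin{itemize}
\item If $D\not\sim 2K_C$, then $B=D-K_C$ has $h^0(B)=2$.
\item If $|B|$ is base-point-free, then $\ker\mu\cong H^0(K_C-B)=0$. So $\mu$ maps the $6$-dimensional $H^0(K_C)\otimes H^0(B)$ onto $H^0(D)$, and $R$ is nearly Gorenstein.
\item Example: $C$ a plane quartic and $D=3K_C-P_1-\cdots-P_4$ with the $P_i$ general. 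Then $|B|$ is cut out by the conics through the $P_i$.
\item Failure occurs only when $|B|$ has a base point $X$. Then $K_0L_1\subseteq H^0(D-X)$, which has dimension $5$.
\end{itemize}
The paper simply asserts $\dim K_0L_1<6$ whenever $D\not\sim 2K_C$, and that is exactly the false step. The correct criterion for $\deg D=8$ is: $R$ is nearly Gorenstein if and only if $|D-K_C|$ is base-point-free and, in the case $D\sim 2K_C$, $C$ is not hyperelliptic.
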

\begin{proof} 
During the proof of this theorem, we put 
\[K_n = [K_R]_n = H^0(C, \cO_C(K_C+nD))\quad {\rm and} \quad  L_n = [\KiR]_n = 
H^0(C, \cO_C(- K_C+nD)).
\] 
By Riemann-Roch Theorem and Serre Duality Theorem, we always have 
\[
(*) \quad h^0(D)-h^0(K_C-D)= \deg D - 2.
\]
Hence we always have 
\[
(**) \quad \dim_k R_n = \dim_k K_{-n} + n \deg D -2.
\] 

\par \vspace{1mm}
(1) Suppose that $R$ is nearly Gorenstein, and let $\deg D =1$.  By Proposition \ref{degD|2g-2}, we have $R_1 =0$. This implies that 
$\dim K_{-1}=1$ by $(**)$ above. 
Also, we have $\dim_k R_2 = \dim_k K_{-2}$ and $\dim_k R_3 
= \dim_k K_{-3} +1$, 
$K_n = 0$ for $n\le -4$ and $L_n =0$ for $n\le 4$.  

\par 
(a) First we assume that $R_2= 0$. 
Since $R_2=0$, we have $K_{-2}=0$.  
By $(**)$ we have $R_3 \ne 0$ and 
$\dim_k R_3 = \dim_k K_{-3}+1$. 
We must have $R_3 = K_{-3}L_6$ since $K_{-2}=0$. 
Hence we must have $K_{-3}\ne 0$,  
$\dim_k R_3=2$ and $\dim_k K_{-3}=1$, since $\dim_k R_3=3$ is impossible. 
Also, since 
$L_5\cdot K_{-3}
\subset R_2 =0$, we must have $L_5=0$. 
\par
Since $R_3 = K_{-3}L_6$, this implies that $\dim_k L_6=2$. 
Since 
$\deg (-K_C + 6D)=2$, 
this is only possible if $C$ is hyperelliptic and $-K_C + 6D \sim 2P$ for some hyperelliptic 
point $P\in C$. 
\par
Next, since $4D \not\sim K_C$ (otherwise $R$ is Gorenstein), we have $\dim_k R_4=2$
 and we must have $R_4 = K_{-3}L_7$ since $K_{-2}=L_5=0$. This implies that
  $\dim_k L_7=2$. 
Now, $-K_C +7D \sim 2P +D$ and $h^0(2P+D) - h^1(2P+D)=1$, we have 
$h^0(2P -D)=h^1(2P+D)= h^0(K_C -(2P+D))= 1$, since we have $K_C \sim 4P$. 
Hence we have $2P \sim D + Q$ for some point $Q\in C$. But then we will have            
$h^0(D) = h^0 (2P - Q) \ge 1$, contradicting our assumption $h^0(D)=0$.

\par \vspace{1mm} 
(b) Next assume that $R_2 \ne 0$. Since $a(R)\le 3$ and $b(R)\ge 5$, we must have 
$R_2 = K_{-3}\cdot L_5$. Since $\deg (K_C -3D) 
=\deg (-K_C + 5D) = 1$, we must have 
$\dim_k R_2=\dim_k K_{-3}=\dim_k L_5=1$ and we have also 
$\dim_k R_3 = 2$, $K_C \sim 3D+P, 5D \sim K_C + Q$ 
hence $2D \sim P+Q$ for some points 
$P, Q \in C$.  
Now we must have $R_3 = K_{-2}L_5 + K_{-3}L_6$. 
Since $5D\sim K_C+Q$, we get
\[
L_5 =H^0(C,\cO_C(Q)), \quad  
L_6= H^0(C,\cO_C(D+Q)).
\] 
Also, 
\[
K_{-2}=H^0(C,\cO_C(D+P)), \quad K_{-3}=H^0(C,\cO_C(P)). 
\]

\par
Next we look at $L_6$. 
Since $6D \sim 2K_C - 2P$, 
we have $L_6 = H^0(C,\cO_C(K_C - 2P))$ and 
$\dim_k L_6 \ge \dim_k H^0(C,\cO_C(K_C)) -2 =1$.  
If $\dim_k L_6 = 1$, then we have $\dim_k H^0(C,\cO_C(2P)) =2$ and $C$ is hyperelliptic and 
$P$ is a hyperelliptic point. 
Then $4D\sim K_C$ and $R$ is Gorenstein. 
If $\dim_k L_6=2$, 
since $\deg (-K_C+6D) = 2$, 
then again $K_C- 2P\sim 2P'$ for some hyperelliptic point $P'$ and 
then again $4D\sim K_C$ and $R$ is Gorenstein.

\par \vspace{1mm}
If $\deg D=2$, then $H^0(C,\cO_C(D))=0$ by Proposition 
\ref{degD|2g-2}. 
Then by $(**)$, we have $H^0(C, \cO_C(K_C -D))=0$; namely, $K_{-1} =0$. 
Since $\deg (2D) =4$, we have $R_2 \ne 0$. But since $L_2=K_{-1}=0$, 
$\KiR\cdot K_R \subset \bigoplus_{n\ge 3}R_n$ and $R$ is not nearly Gorenstein. 
\par \vspace{1mm}
(2) See Proposition \ref{d=g-1} (3). 
 
\par \vspace{1mm}
(3) It follows from Proposition \ref{degD|2g-2}. 

\par \vspace{2mm}
 Now suppose $\deg D \ge 5$. 
Then we must have $a(R)=0, \dim_k K_0=3$ and 
$\dim_k R_1 = \deg D -2,  
m(R)=1$, $b(R)=1$ and $R_1= K_0\cdot L_1$.

\par \vspace{1mm}
(4) This follows from Proposition \ref{d=g-1} (4). 

\par \vspace{1mm}
(5) If $\deg D =6$, then $\dim_k R_1 = 4$. 
 If $R$ is nearly Gorenstein, then  
we muct have $R_1 = K_0\cdot L_1$.  Since $\dim_k K_0=3$, 
we must have $\dim_k L_1>1$. 
Since $\deg (- K_C +D) =2$, $C$ must be hyperelliptic and 
$D-K_C \sim 2P$, where $P$ is a hyperelliptic point of $C$. Since $K_C \sim 4P$ then, 
we must have $D\sim 6P$. Conversely, if $P$ is a hyperelliptic point and $D=6P$, 
then $R$ is nearly Gorenstein by Proposition \ref{hypell} (4)(b).
\par
\vspace{1mm}
If $\deg(D) \ge 7$,
then $R$ is a standard graded ring by \cite{Mum} and 
by Proposition \ref{a(R)b(R)},   
$R$ is nearly Gorenstein if and only if $R_1 = K_0\cdot L_1$.
This explains (8). For $d=7, 8$, we have more explicit characterization 
for $R$ to be nearly Gorenstein.  

\par
\vspace{1mm}
(6) 
Assume $\deg D =7$ and that $R$ is nearly Gorenstein. 
Then we must have  $R_1 = K_0\cdot L_1$. Since  $\dim_k R_1 = h^0(D)= 5$ and  $\dim_k K_0=3$, 
we must have $\dim_k L_1=h^0(D-K_C)>1$. 
Hence $D-K_C=B$ is an effective divisor of degree $3$ and $h^0(B)\ge 2$. 
By ($*$), we have $h^0(B) = 2$ and $h^0(K_C - B)=1$. Then we have $K_C = B+Q$ 
for some point $Q\in C$.  
\par \vspace{1mm}
Now, we will show that $R_1= K_0L_1$ if $Q(B)$ has no base point.  
We have seen that $h^0(B)= h^0( K_C - Q)=2$. 
Let $(a,b)$ be basis of \par\noindent
$H^0(C,\cO_C(B))$. 
First assume that $\cO_C(K_C-Q)$ has no base point. 
Then by multiplication of $(a, b)$, 
we get an exact sequence 
\[
0 \to \cO_C(-B)\to \cO_C^{\oplus 2} \to \cO_C(B)\to 0.
\]
Tensoring $\cO_C(K_C)$, noting $B = K_C-Q$ and $D=K_C+B$, we have      
\[
0 \to \cO_C(Q) \to \cO_C(K_C)^{\oplus 2} \to \cO_C(2K_C - Q)= \cO_C(D) \to 0.
\] 
Since  $h^1(Q) = 2 =  2 \cdot h^1(K_C)$ and $h^1(D)=0$, 
the mapping $H^0(\cO_C(K_C)^{\oplus 2}) \to H^0(\cO_C(B))$ is surjective and hence 
we have $R_1 = K_0 \cdot L_1$. 
\par 
If $\cO_C(B)$ has a base point $P$, then $L_1 = H^0(C,\cO_C(B)) = H^0(C,\cO_C(B-P))$. 
Hence $K_0\cdot L_1 \subset H^0(C,\cO_C(K_C + B - P))$. But since 
$H^0(C,\cO_C(K_C + B - P))\subsetneq  H^0(C,\cO_C(K_C + B))=R_1$, $R$ is not nearly Gorenstein. 

\par \vspace{1mm}
(7) Let $\deg D=8$ and assume $R_8 = K_0\cdot L_8$. If $8D - K_C \not\sim K_C$, 
then $\dim_k L_8=2$. Since $\dim_k L_8=2$ and $\dim_k K_0 =3$, we can easily see 
$\dim_k K_0\cdot L_8 < 6$ and $R$ is not nearly Gorenstein. Conversely, if $R\sim 2K_C$, 
and $C$ is not hyperelliptic, then $R$ is isomorphic to the 2nd Veronese subring of 
$R(C,K_C)$ and is nearly Gorenstein by Proposition \ref{R(d)stand}. If $C$ is hyperelliptic, 
then since $g=3$ and $d=8$, $R$ is not nearly Gorenstein by Proposition \ref{hypell} (4)(b).   
\par \vspace{1mm}
 
(8) If $\deg D>12$, then $R$ is nearly Gorenstein by Theorem \ref{high-deg}. 
If $\deg D= 12$ and $H^1(C,\cO_C(D-2K_C))= 0$, then $R$ is nearly Gorenstein 
by Theorem \ref{high-deg} (1).  
If $\deg D= 12$ and $H^1(C,\cO_C(D-2K_C))\ne 0$, then $D-2K_C\sim K_C$ and 
$D\sim 3K_C$. If $C$ is hyperelliptic, then $R$ is $12$-th Veronese subring as in 
Theorem \ref{hypell}(4)(b) and is nearly Gorenstein. If $C$ is not hyperelliptic, 
then $R\cong R(C,K_C)^{(3)}$ is nearly Gorenstein by Proposition \ref{R(d)stand}.      
\end{proof} 

\begin{ex} If $g=3, \deg D=9$ and $D = 2K_C + P$ for some point $P\in C$, 
then, since $H^0(C, \cO_C(K_C+P))= H^0(C,\cO_C(K_C))$, $K_0\cdot L_1 \subset 
H^0(C,\cO_C(2K_C))\subsetneq H^0(C,\cO_C(D))=R_1$ and $R$ is not nearly Gorenstein. 
Is this the only example that $R(C,D)$ is not  nearly Gorenstein when $g=3$ and $\deg D \ge 9$ ?    
\end{ex}

\medskip
\section{Comparison of nearly Gorenstein and almost 
Gorenstein property for  $2$-dimensional cone singularities}
\label{s:almost}

We are also interested in comparison of {\em almost Gorenstein} rings and nearly Gorenstein 
rings. 
The notion of almost Gorenstein local rings was introduced by Barucci and Fr\"oberg \cite{BF} for one-dimensional local rings
and  has been generalized by Goto, Takahashi, and Taniguchi \cite{GTT} for Cohen-Macaulay local rings having canonical modules of any dimension.

\begin{noname}\label{AGdef}(\cite{GTT}) 
Let $(A,\m)$ be a Cohen-Macaulay local ring of dimension $d$
and $\omega  \in K_A$ be a general element of the canonical module $K_A$.  
We put  $U = K_A/ \omega A$. \par
We say that $A$ is {\em almost Gorenstein} if $U$  satisfies the equality
\[
\mu(U) = e_0(U),
\]   
where  $\mu(U) = \dim_{A/\m}( U /\m U)$ is the number of minimal generators of $U$ and 
$e_0(U)$ denotes the multiplicity of $U$ as an $A$-module of dimension $d-1$. (An $A$-module which satisfies this property is called an  {\em Ulrich $A$-module}.)
\end{noname}

\begin{rem}[\textrm{cf. \cite[Proposition 6.1]{HHS}}] 
If $A$ is an almost Gorenstein ring of $\dim A=1$, then it is also nearly Gorenstein. 
\end{rem}

\par 
So it is natural to hope that \lq\lq almost Gorenstein" property is \lq\lq stronger" than 
\lq\lq nearly Gorenstein" property also in dimension $2$. But we will see in the following, 
this guess is completely false.    

\begin{rem} In dimension $2$, it is proved in \cite{GTT} that rational singularities are 
almost Gorenstein. Also, in \cite{OWYaG}, the authors proved that 
\lq\lq elliptic singularities"
(\ref{pa(Z)}) are almost Gorenstein.  
See also Propositions \ref{R(C,D)rational} and \ref{ell1}. 
\end{rem}

\par 
The following statement is Theorem 6.21 of \cite{OWYaG}. 
We can see that the results are 
completely different 
from our results in Theorem \ref{g=2}.

\begin{thm}[\textrm{\cite[Theorem 6.21]{OWYaG}}] 
Let $R=R(C,D)$ with $g(C)=2$. Then we have the following;
\begin{enumerate}  
\item If $\deg(D) \le 2$, then $R$ is almost Gorenstein.
\item If $\deg(D) = 3$, $R(C,D)$ is almost Gorenstein if and only if $\cO_C(D)$ is 
{\em not} generated.
\item If $\deg(D) \ge 4$ and $D \not\sim 2K_C$, then $R$ is {\em not} 
almost Gorenstein. 
If $D\sim 2K_C$, then $R$ is almost Gorenstein.
\end{enumerate}
\end{thm}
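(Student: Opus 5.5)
The approach is to use the graded structure of $K_R$ and compute both sides of the Ulrich equality $\mu(U)=e_0(U)$ directly, from
\[
K_R=\bigoplus_{n}H^0(C,\cO_C(K_C+nD))T^n
\]
and the Riemann--Roch Theorem \ref{RRTh} with $g=2$. Since $R$ is a two-dimensional normal graded ring, it is Cohen--Macaulay. We take $\omega$ to be a general homogeneous element of $K_R$ of the initial degree $-a(R)$. Then $U=K_R/\omega R$ is a graded Cohen--Macaulay module of dimension $1$. By \cite{GTT}, $U$ is Ulrich if and only if $\m U=xU$ for a general linear combination $x$ of homogeneous parameters. Equivalently, $\mu(U)=\mu(K_R)-1$ equals $e_0(U)$, and $e_0(U)$ is read off from the eventual behaviour of the Hilbert function $\dim_k U_n$. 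That Hilbert function is
\[
h^0(K_C+nD)-h^0((n+a(R))D).
\]
For $n\gg0$, Riemann--Roch turns it into the constant $\deg K_C-a(R)\deg D+\ldots$, corrected by the multiplicity of $R$ itself.

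We split into cases according to $\deg D$ and carry out the computation as follows.
\begin{enumerate}
\item[(i)] If $\deg D\le 2$, then $a(R)>0$ (or $a(R)=0$ with $K_R$ small). We expect $U$ to be generated in a single degree with $\dim_k U_n$ stabilizing immediately, which gives Ulrichness. Alternatively, we can check from Proposition \ref{Tomari} and the paper's $\deg[nD]$ computations that $R$ is an elliptic singularity when $\deg D=1$. We then invoke the remark above that elliptic (and rational) singularities are almost Gorenstein by \cite{OWYaG}, \cite{GTT}.
\item[(ii)] If $\deg D\ge3$, then $a(R)=0$ and $[K_R]_0=H^0(K_C)$ has dimension $2$. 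We have $\omega\in H^0(K_C)$, and the other canonical section $w$ gives the sequence used in the proof of Theorem \ref{high-deg}:
\[
0\to \cO_C(nD-K_C)\to \cO_C(nD)^{\oplus 2}\to \cO_C(K_C+nD)\to 0 .
\]
This computes the cokernel of $\omega R\to K_R$ degree by degree. It shows that for $n\ge1$, $U_n$ has dimension $2=\deg K_C$ and is controlled by $H^1(C,\cO_C(nD-K_C))$.
\end{enumerate}

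The Ulrich condition then becomes a condition on the generators of $K_R$. The module $U$ is Ulrich precisely when it is generated by $U_0=k\,w$ together with at most one further degree, in such a way that $\mu(U)$ matches the multiplicity.

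For $\deg D=3$, we expect the equality to hold exactly when the multiplication $R_1\otimes U_0\to U_1$ fails to be surjective in the right way. This failure should translate into $\cO_C(D)$ having a base point, i.e. $D\sim K_C+P$ with $\cO_C(D)$ not generated. For $\deg D\ge4$, $R$ is standard graded or close to it, and $U$ has the extra generator coming from $H^1(\cO_C(D-K_C))$ or from $H^0(K_C+D)/wR_1$. This forces $\mu(U)>e_0(U)$ unless $D\sim 2K_C$. In that case $R=R(C,K_C)^{(2)}$, and we can compute the Ulrich property directly, since the canonical ring of a genus-$2$ curve is explicit: $k[X,Y,Z]/(Z^2-f_6(X,Y))$ with degrees $(1,1,3)$.

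The main obstacle is the precise computation of $e_0(U)$ with respect to $\m$ rather than with respect to a degree-one parameter. When $R$ is not standard graded (small $\deg D$), the multiplicity of $\m$ on $U$ differs from the eventual value of $\dim_k U_n$. To handle this, we will first find a minimal reduction of $\m$ generated by homogeneous elements of the smallest possible degrees. We then compute $e(x;U)=\ell(U/xU)$ for a general such $x$ using the exact sequence above. The case $\deg D=3$, where the base-point condition must appear, needs the most care.
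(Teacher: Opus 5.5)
The paper gives no proof of this statement; it is quoted from \cite[Theorem 6.21]{OWYaG}. Your proposal therefore has to stand on its own, and as written it has two genuine gaps.

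The first gap is that your conclusion for $\deg D\ge 4$ runs backwards. For a Cohen--Macaulay module one always has $\mu(U)\le e_0(U)$: take $e_0(U)=\ell(U/xU)$ for a general $x\in\m$. So an extra generator can never force $\mu(U)>e_0(U)$. Read correctly, your own Koszul sequence settles every case with $\deg D\ge 3$. The cokernel of $R_n\omega\oplus R_nw\to[K_R]_n$ is $\ker\bigl(H^1(\cO_C(nD-K_C))\to H^1(\cO_C(nD))^{\oplus 2}\bigr)$. This vanishes for $n\ge 2$, and for $n=1$ it equals $H^1(\cO_C(D-K_C))\cong H^0(\cO_C(2K_C-D))^*$. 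Hence $\mu(U)=1+h^0(\cO_C(2K_C-D))$, while $e_0(U)=2$ (justified below). Consequently:
\begin{itemize}
\item For $\deg D\ge 5$, or for $\deg D=4$ with $D\not\sim 2K_C$, one has $K_R=R\omega+Rw$. Then $U$ is cyclic and $\mu(U)=1<2=e_0(U)$.
\item The single extra generator supplied by $H^1$ exists exactly when $D\sim 2K_C$, or when $\deg D=3$ and $D\sim 2K_C-P\sim K_C+\iota(P)$. Here $\iota$ is the hyperelliptic involution, and the $\deg D=3$ condition says $\cO_C(D)$ has a base point.
\end{itemize}
That extra generator is what \emph{makes} $U$ Ulrich, not what prevents it. For the negative statements you must also rule out embeddings $R\to K_R$ other than a general homogeneous one, unless you work with the ``general element'' formulation of \ref{AGdef}.

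The second gap concerns the multiplicity and part (1). $e_0(U)$ is not the eventual value of $\dim_k U_n$, and the correction is not by the multiplicity of $R$. The right tool is the associativity formula $e_0(U)=\sum_Q \ell(U_{\mathfrak p_Q})\,e(R/\mathfrak p_Q)$, summed over the points $Q$ of $\operatorname{div}(\omega)$. Here $R/\mathfrak p_Q$ is the semigroup ring of $\{n\ge 0 : Q\notin\operatorname{Bs}|nD|\}$, whose multiplicity is its least positive element. The cases then go as follows:
\begin{itemize}
\item If $\deg D\ge 2$, $D\not\sim K_C$ and $\omega\in H^0(K_C)$ is general, this gives $e_0(U)=2$, because $h^0(D)>0$ and $\operatorname{div}\omega$ avoids $\operatorname{Bs}|D|$. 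This justifies the value used above, even for $\deg D=3,4$ where $R$ is not standard graded.
\item If $\deg D=1$, $h^0(D)=0$ and $2D\not\sim K_C$, then $a(R)=0$, $R_1=0$ and $\dim_k U_n=1,2,2,\dots$. Nevertheless $e_0(U)=2+2=4$, and $U$ needs generators in degrees $0,1,1,2$. Both your recipe for $e_0$ and the heuristic ``generated in one degree, stabilizing immediately'' fail here, although $\mu(U)=4=e_0(U)$ does hold.
\item If $D\sim P$ with $P$ not a Weierstrass point, then $[K_R]_{-1}=H^0(\cO_C(K_C-P))$ is one-dimensional. So $\omega$ is forced, with $\operatorname{div}\omega=\iota(P)$ and $e_0(U)=1$, and you must verify that $\mu(K_R)=2$.
\end{itemize}
Finally, the fallback through ellipticity is not available. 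For integral $D$ the exceptional curve is $C$ itself, with $p_a(Z_f)=g=2$, and Proposition \ref{Tomari} only concerns rational or elliptic $C$. So $R$ is neither rational nor elliptic. Part (1) therefore requires exactly this case-by-case computation, which the proposal does not carry out.
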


\par 
Now we compare this result and Theorem \ref{g=2}, 
we get the following result 
showing \lq\lq almost Gorenstein" and \lq\lq nearly Gorenstein" 
properties are
almost contradictory for cone singularities of $g=2$. 

\begin{thm} Let $R=R(C,D)$, $D$ is an  {\em integral} ample 
divisor on a smooth curve $C$ of genus $2$. 
\begin{enumerate}
\item[$(1)$] If $R$ is nearly Gorenstein, almost Gorenstein and not Gorenstein, then 
we have either;
\begin{enumerate} 
\item[\rm{(a)}]  $\deg D = 3$ and $D\sim K_C +P$ for some point $P\in C$.  
\item[\rm{(b)}]  $D\sim 2K_C$.  
\end{enumerate}
\item[$(2)$] 
If $R$ is not Gorenstein, not almost Gorenstein and not nearly Gorenstein, then 
$D$ is one of the following$:$
\begin{enumerate}
\item[\rm{(a)}]  $\deg D =3$ and $\cO_C(D)$ is generated by global sections.
\item[\rm{(b)}] $\deg D =5$ and $D \sim 3K_C -P$ for some point $P\in C$. 
\item[\rm{(c)}]  $\deg D=6$ and $D\sim 3K_C$. 
\end{enumerate}
\end{enumerate}
\end{thm}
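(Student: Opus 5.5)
The argument intersects two classifications: Theorem \ref{g=2} for nearly Gorenstein and the quoted \cite[Theorem 6.21]{OWYaG} for almost Gorenstein. We go through the possible values of $\deg D$ for genus $2$, always with $R$ not Gorenstein.

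For $(1)$, suppose $R$ is nearly Gorenstein and not Gorenstein. By Theorem \ref{g=2}(1), $\deg D\ge 3$.
\begin{enumerate}
\item[$\bullet$] If $\deg D=3$, Theorem \ref{g=2}(2) gives $D\sim K_C+P$. This is case (a), and nothing more is needed.
\item[$\bullet$] If $\deg D\ge 4$, almost Gorensteinness forces $D\sim 2K_C$ by \cite[Theorem 6.21]{OWYaG}(3). This is case (b).
\end{enumerate}
Incidentally, in case (a) we have $h^0(K_C+P)=h^0(K_C)$, so $P$ is a base point of $|K_C+P|$ and $\cO_C(D)$ is not generated. This matches the almost Gorenstein criterion in \cite[Theorem 6.21]{OWYaG}(2), so case (a) really occurs.

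For $(2)$, suppose $R$ is neither Gorenstein, almost Gorenstein, nor nearly Gorenstein.
\begin{enumerate}
\item[$\bullet$] $\deg D\le 2$ is impossible, since such $R$ is almost Gorenstein by \cite[Theorem 6.21]{OWYaG}(1).
\item[$\bullet$] $\deg D\ge 7$ is impossible, since such $R$ is nearly Gorenstein by Theorem \ref{g=2}(6).
\item[$\bullet$] $\deg D=4$: not nearly Gorenstein means $D\not\sim 2K_C$ by Theorem \ref{g=2}(3). But Theorem \ref{g=2}(3) and \cite[Theorem 6.21]{OWYaG}(3) single out the same class $2K_C$, and the two properties fail simultaneously exactly when $D\not\sim 2K_C$. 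So this case has to be handled explicitly; see the last paragraph.
\item[$\bullet$] $\deg D=3$: $R$ is not almost Gorenstein exactly when $\cO_C(D)$ is globally generated. Such a $D$ cannot be $\sim K_C+P$, by the base-point remark above. So non-nearly-Gorensteinness is automatic, giving (a).
\item[$\bullet$] $\deg D=5$: $R$ is automatically not almost Gorenstein, since $D\not\sim 2K_C$ for degree reasons. By Theorem \ref{g=2}(4), $R$ fails to be nearly Gorenstein iff $h^0(3K_C-D)\ne 0$. As $\deg(3K_C-D)=1$, this means $3K_C-D\sim P$ for a point $P$, i.e.\ $D\sim 3K_C-P$. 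This gives (b).
\item[$\bullet$] $\deg D=6$: again not almost Gorenstein, and by Theorem \ref{g=2}(5) not nearly Gorenstein iff $D\sim 3K_C$. This gives (c).
\end{enumerate}

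The main obstacle is $\deg D=4$. The quoted theorems as literally stated seem to let any $D\not\sim 2K_C$ of degree $4$ be neither almost nor nearly Gorenstein, yet the statement's list omits degree $4$. To close this gap I would first check that a non-Gorenstein $R$ with $\deg D=4$ always lies in one of the classes. The first attempt would be to reread \cite[Theorem 6.21]{OWYaG}(3) carefully and see whether the intended degree range really starts at $4$. The second would be to look for an a priori argument that degree-$4$ rings with $D\not\sim 2K_C$ are almost Gorenstein, perhaps via a computation of the Ulrich condition when $\deg D=\deg K_C+2$.

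If neither works, I would note that the list in $(2)$ must be enlarged by the case $\deg D=4$, $D\not\sim 2K_C$. I expect the intended proof to be simply the case-by-case intersection above, with the degree-$4$ discrepancy resolved by the precise form of \cite{OWYaG}.
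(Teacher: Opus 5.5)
Your case-by-case intersection of Theorem~\ref{g=2} with the quoted Theorem~6.21 is the argument the paper intends. The paper gives no proof beyond saying the result follows by comparing the two, and all your conclusions outside degree $4$ are correct.

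Your degree-$4$ worry is not a gap in your proof but an error in the statement. If $\deg D=4$ and $D\not\sim 2K_C$, then $R$ is not Gorenstein, not nearly Gorenstein and not almost Gorenstein. So part $(2)$ is false as printed; an example is $D=K_C+P+Q$ with $Q$ not the hyperelliptic conjugate of $P$. Neither of your two remedies can succeed. Instead, verify the counterexample directly; the check does not depend on how the cited result is worded.

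\emph{Not Gorenstein.} $K_C\sim aD$ is impossible, since $2\ne 4a$ for an integer $a$.

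\emph{Not nearly Gorenstein.} We have $[K_R]_n=0$ for $n<0$ and $[\KiR]_n=0$ for $n\le 0$, so $[\Tr_R K_R]_1=[K_R]_0[\KiR]_1$. Since $2K_C-D$ is a nontrivial class of degree $0$, Riemann--Roch gives $h^0(D-K_C)=1+h^0(2K_C-D)=1$. Hence $\dim_k [K_R]_0[\KiR]_1\le 2<3=h^0(D)$.

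\emph{Not almost Gorenstein.} Let $0\to\cO_C(-K_C)\to\cO_C^{\oplus 2}\to\cO_C(K_C)\to 0$ be given by a basis of $H^0(K_C)$, and tensor it with $\cO_C(nD)$. This shows $[K_R]_0R_n=[K_R]_n$ whenever $H^1(nD-K_C)\cong H^0(2K_C-nD)^*$ vanishes. That vanishing holds for every $n\ge 1$, precisely because $D\not\sim 2K_C$. Thus $K_R=[K_R]_0R$ and $\mu(K_R)=2$. For general $\omega\in[K_R]_0$, the module $U=K_R/\omega R$ is one-dimensional Cohen--Macaulay with $\mu(U)=1$. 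On the other hand, $\dim_k U_n=h^0(K_C+nD)-h^0(nD)$ equals $1$ for $n=0$ and $(4n+1)-(4n-1)=2$ for $n\ge 1$. Since $D$ is base point free, $R_1$ generates a reduction of $\m_R$, so a general $f\in R_1$ gives $e_0(U)=\ell(U/fU)=2\ne\mu(U)$.

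For comparison, when $D\sim 2K_C$ the same sequence has a one-dimensional cokernel at $n=1$. Then $\mu(U)=2=e_0(U)$, so the quoted theorem is right in degree $4$ and rereading it will not remove the case.

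Hence $(2)$ needs a fourth item: $\deg D=4$ and $D\not\sim 2K_C$. With that amendment your argument is a complete proof, and $(1)$ is correct as stated.
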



\end{document}